\numberwithin{equation}{section}
\newtheorem{theorem}{Theorem}[section]
\newtheorem{lemma}[theorem]{Lemma}
\newtheorem{prop}[theorem]{Proposition}
\theoremstyle{definition}
\newtheorem{definition}[theorem]{Definition}
\newtheorem{conjecture}[theorem]{Conjecture}
\newtheorem{example}[theorem]{Example}
\newtheorem{remark}[theorem]{Remark}
\DeclarePairedDelimiter\floor{\lfloor}{\rfloor}
\def\opn#1#2{\def#1{\operatorname{#2}}} 
\opn\Cl{Cl} \opn\pdim{pdim} \opn\Im{Im} \opn\Ker{Ker} \opn\ini{in} \opn\ann{ann} \opn\depth{depth}  \opn\Spec{Spec}  \opn\supp{supp} \opn\lcm{lcm} \opn\lex{lex}  \opn\diam{diam}  \opn\height{height}
\title{On the $(S_2)$-condition of edge rings for cactus graphs}
 \author{Rodica Dinu}
\address{%
	University of Konstanz, Fachbereich Mathematik und Statistik, Fach D 197 D-78457 Konstanz, Germany, and Simion Stoilow Institute of Mathematics of the Romanian Academy, Calea Grivitei 21, 010702, Bucharest, Romania}
	\email{rodica.dinu@uni-konstanz.de}
\author{Nayana Shibu Deepthi}
\address{Department of Pure and Applied Mathematics, Graduate School of Information Science and Technology, Osaka University, Suita, Osaka 565-0871, Japan}
\email{nayanasd@ist.osaka-u.ac.jp}
\subjclass[2020]{Primary : 13H10; Secondary: 52B20, 14M25} 
\keywords{Edge rings, Normality, Odd cycle condition, Cactus graphs, Serre's condition $(S_2)$}
\begin{document}

\begin{abstract}

A cactus graph is a connected graph in which every block is either an edge or a cycle. In this paper, we will examine cactus graphs where all the blocks are $3$-cycles, i.e., triangular cactus graphs, of diameter $4$.
Our main focus is to prove that the corresponding edge ring of this family of graphs is not normal and satisfies Serre's condition $(S_2)$.  We use a criterion due to Katth\"an for non-normal affine semigroup rings.
\end{abstract}

\maketitle

\section{Introduction}\label{sec:introduction}

The study of the edge rings and edge polytopes arising from finite connected graphs is of interest from both combinatorics and commutative algebra perspectives. Being inspired by the article of Ohsugi and Hibi \cite{OH}, we are interested in studying the properties of the edge rings associated with finite graphs. Some interesting results on the edge rings can be found in \cites{HN, svv, HHKO, HMT, Nayana}.

Let $G$ be a finite connected graph on the vertex set $[d]=\{1, 2, \dots, d\}$ with the edge set $E(G)=\{e_1,e_2, \dots, e_n\}$. We will always assume that $G$ is simple, i.e., $G$ is without loops and multiple edges.
We consider the polynomial ring $\mathbb{K}[\mathbf{t}]:=\mathbb{K}[t_1, \dots, t_d]$ in $d$ variables over a field $\mathbb{K}$. For an edge $e=\{i, j\}\in E(G)$, we define the quadratic monomial $\mathbf{t}^e:=t_it_j$. The subring of $\mathbb{K}[\mathbf{t}]$ generated by $\mathbf{t}^{e_1}, \dots, \mathbf{t}^{e_n}$ is called the \textit{edge ring} of $G$, denoted by $\mathbb{K}[G]$. 
By \cite{OH}*{Proposition 1.3}, the edge ring $\mathbb{K}[G]$ has dimension $d$ if $G$ is not a bipartite graph. For an edge $e=\{i, j\}\in E(G)$, we define $\rho(e):=\mathbf{e}_i+ \mathbf{e}_j$, where $\mathbf{e}_1, \dots, \mathbf{e}_d$ are the canonical unit coordinate vectors of $\mathbb{R}^d$. We consider the affine semigroup generated by $\rho(e_1), \dots, \rho(e_n)$, which we denote by $S_G$. Let $\mathbb{A}_G:=\{\rho(e) : e\in E(G)\}$. We may regard the edge ring $\mathbb{K}[G]$ as the affine semigroup ring of $S_G$, for which the cone $\mathcal{C}_G:=\mathbb{Q}_{\geq 0}\mathbb{A}_G$ plays an essential role in its study.

The edge ring $\mathbb{K}[G]$ is normal if and only if $\mathbb{Z}_{\geq 0}\mathbb{A}_G=\mathbb{Q}_{\geq 0}\mathbb{A}_G \cap \mathbb{Z}\mathbb{A}_G$, see \cite{BH}*{Section 6.1}. The problem of the normality of $\mathbb{K}[G]$ was studied by Ohsugi and Hibi in \cite{OH}, and by Simis, Vasconcelos, and Villareal in \cite{svv}. 
They gave a combinatorial criterion for normality in terms of the graph $G$. It turns out that the odd cycle condition in classical graph theory gives a characterization of normal edge rings, see \cite{OH}*{Theorem 2.2} and \cite{svv}*{Theorem 1.1}.
By \cite{Hochster}*{Theorem 1}, it is known that if $\mathbb{K}[G]$ is normal, then $\mathbb{K}[G]$ is Cohen--Macaulay. Using a general characterization for normality \cite{BH}*{Theorem 2.2.22}, it follows that $\mathbb{K}[G]$ is normal if and only if $\mathbb{K}[G]$ satisfies Serre's conditions $(R_1)$ and $(S_2)$.  A criterion in terms of the combinatorics of the graph $G$ for edge rings to satisfy Serre's condition $(R_1)$ was given in \cite{HK}*{Theorem 2.1}. 
In addition, according to a general result due to Trung and Hoa \cite{TrungHoa}*{Theorem 4.1}, and Sch\"afer and Schenzel \cite{SS}*{Theorem 6.3}, Serre's condition $(S_2)$ is a necessary condition for $\mathbb{K}[G]$ to be Cohen--Macaulay.

We are interested in understanding the $(S_2)$-condition for $\mathbb{K}[G]$. We recall here that, for a Noetherian ring $A$, the $(S_2)$-condition requires that $\depth(A_{\mathfrak{p}})\geq \inf\{\height(\mathfrak{p}), 2\}$ for any prime ideal $\mathfrak{p}$. 
Not much is known in general about the $(S_2)$-condition for non-normal edge rings. In the paper \cite{Nayana}, the second author provided a class of graphs for which their associated edge rings are non-normal but satisfy the $(S_2)$-condition. It was also shown that this class of graphs is the smallest with respect to those properties: adding new edges breaks the non-normality of the edge ring or violates the $(S_2)$-condition. Higashitani and Kimura~\cite{higashitani2018necessary} provided a combinatorial criterion to check if an edge ring satisfies the $(S_2)$-condition, which was actually used in the paper \cite{Nayana}. However, no other results are known in the literature about the $(S_2)$-condition (and Cohen--Macaulay property) for non-normal edge rings. In this article, we show that for the class of triangular cactus graphs of diameter 4, the associated edge ring is non-normal and it satisfies the $(S_2)$-condition. Recall that a triangular cactus graph is a connected graph in which every block is a cycle of length 3. We will use a result of Katth\"an \cite{Katt}, which shows that Serre's conditions are hidden in the geometry of the set of holes of the affine semigroup $S_G$.

We present the structure of the article. Section~\ref{sec:prelim} is devoted to recalling fundamental definitions, notations, and results concerning classical graph theory, edge rings, and tools from Katth\"an's paper that will be needed. The key ingredient will be  Theorem~\ref{thm:S2holes}, which says that, for a $d$-dimensional affine semigroup $S$, its affine semigroup ring $\mathbb{K}[S]$ satisfies Serre's condition $(S_2)$ if and only if every family of holes of $S$ is of dimension $d - 1$. In Section~\ref{sec:Cgraphs}, we focus on the class of triangular cactus graphs of diameter 4 and state in Theorem~\ref{thm:cactusS2} that, in this case, the edge ring $\mathbb{K}[G]$ is not normal and satisfies the $(S_2)$-condition. Section~\ref{sec:proof} is entirely devoted to the proof of Theorem~\ref{thm:cactusS2}. The description of the supporting hyperplanes of $\mathcal{C}_G$ depends on the regular vertices and the fundamental sets of $G$, see Theorem~\ref{supphyperplanes}. We shall consider certain vertices and shall analyze, based on their configuration, whether they are regular or non-regular cutpoints in $G$. Hence we distinguish the triangular cactus graphs of diameter 4 into two types, and in both cases, we look for the fundamental sets. In addition, the normalization of $S_G$, denoted $\overline{S_G}$, can be expressed in terms of exceptional pairs in $G$, due to Theorem~\ref{normalization}. Hence, another goal will be to understand the exceptional pairs in both types. In Proposition~\ref{prop:hole1} and Proposition~\ref{prop:hole2}, we show that, in both types, $\overline{S_G}\setminus S_G$ decomposes as a union of $(d-1)$-dimensional families of holes of $S$. Using Theorem~\ref{thm:S2holes}, we conclude our main result. In Section~\ref{sec:conclude}, we present some conclusions. We conjecture that the edge ring of a triangular cactus graph satisfies the $(S_2)$-condition, and we believe that it might even be Cohen--Macaulay.

\section{Preliminaries}\label{sec:prelim}
We recall in this section some fundamental definitions and results from finite graphs theory, edge rings, and an important criterion regarding Serre's condition $(S_2)$ for the affine semigroup ring, which will be of use for our paper.
\subsection{Graph theory} Let $G$ be a finite simple graph on the vertex set $V(G)=[d]$, where $d\geq 2$, and $E(G)=\{e_1, e_2, \dots, e_n\}$, the set of edges of $G$. A {\em path} between any two vertices is a sequence of distinct edges that joins the two vertices. 
A path from vertex $u$ to $v$ is the {\em shortest path} if there is no other path from $u$ to $v$ with a lower length.
The {\em distance} between two vertices is the length of the shortest path between those two vertices.
The shortest path length between a graph's most distant vertices is known as its {\em diameter} and for the graph $G$, let us denote it as $\diam{G}$. A cycle of length $n$ will be called an {\em $n$-cycle}. A cycle has a \textit{chord} if there is a pair of vertices that are adjacent, but not along the cycle.     
A \textit{minimal} cycle in $G$ is a cycle with no chord. An \textit{odd cycle} in a graph is a cycle whose length is odd.

Let $C$ and $C'$ be two minimal cycles of $G$ with $V(C)\cap V(C')=\emptyset$. Then a \textit{bridge} between $C$ and $C'$ is an edge $e=\{i, j\}$ of $G$ with $i\in V(C)$ and $j\in V(C')$.
A pair of odd cycles $(C, C')$ is called an \textit{exceptional} pair if $C$ and $C'$ are minimal odd cycles in $G$ such that $V(C)\cap V(C')=\emptyset$ and there is no bridge connecting them.

We say that a graph $G$ satisfies the \textit{odd cycle condition} if, for any two odd cycles $C$ and $C'$ of $G$, either $V(C)\cap V(C')\neq \emptyset$ or there exists a bridge between $C$ and $C'$. Equivalently, the graph $G$ has no exceptional pairs.

Let $U \subset V(G)$. The \textit{induced subgraph} of $G$ on $U \subset [d]$ is the subgraph $G_U$ of $G$ with $V(G_U)=U$ and $E(G_U)=\{e\in E(G) : e\subset U\}$. For $v\in V(G)$, we denote $G\setminus v$ as the induced subgraph of $G$ on the vertex set $V(G\setminus v)=V(G)\setminus \{v\}$. For $T\subset V(G)$, we denote the set of its neighbor vertices by  
\[
N_{G}(T):= \{v\in V(G) : \{v, w\}\in E(G) \text{ for some } w\in T\}.
\]
A non-empty subset of vertices  $T \subset V(G)$ is called \textit{independent} if no edge of $G$ is of the form $e=\{i, j\}$ with $i\in T$ and $j\in T$. For an independent set $T\subset V(G)$, we define a \textit{bipartite graph induced by $T$} in the following way: its set of vertices is $T\cup N_{G}(T)$ and its set of edges is given by $\{\{v,w\}\in E(G): v\in T, w\in N_{G}(T)\}$.

\begin{definition}
Let $G$ be a finite connected simple graph and $V(G)$ its set of vertices. A vertex $v\in V(G)$ is called \textit{regular} in $G$ if every connected component of $G\setminus v$ contains at least one odd cycle.
A non-empty set $T \subset V(G)$ is called \textit{fundamental} in $G$ if the following conditions are satisfied:
\begin{enumerate}
\item $T$ is an independent set;
\item the bipartite graph induced by $T$ is connected;
\item either $T\cup N_{G}(T)=V(G)$ or every connected component of the graph $G_{V(G)\setminus (T\cup N_{G}(T))}$ contains at least one odd cycle.
\end{enumerate}
\end{definition}
Note that a regular vertex is not the same as a fundamental set with one element.

\begin{example} 
To illustrate that a regular vertex differs from a single element fundamental set, consider the simple graph shown in Figure~\ref{fig:eg}. 
The regular vertices of the graph are $v_{2},v_{3},v_{4}$ and $v_{5}$.
In contrast, there is only one single element fundamental set for the graph, which is  $\{v_{1}\}$, consisting of a vertex that is not regular.
\end{example}

\begin{figure}[ht]
\centering
\begin{tikzpicture}
\draw[black, thin] (0,1) -- (2,2) -- (0,3)-- cycle;
\draw[black, thin] (2,2) -- (4,1) -- (4,3)-- cycle;
\filldraw [black] (2,2) circle (0.5pt);
\filldraw [black] (2.1,2) node[anchor=north] {{\tiny $v_{1}$}};
\filldraw [black] (0,1) circle (0.5pt);
\filldraw [black] (0,1) node[anchor=north] {{\tiny $v_{2}$}};
\filldraw [black] (0,3) circle (0.5pt);
\filldraw [black] (0,3) node[anchor=south] {{\tiny $v_{3}$}};
\filldraw [black] (4,3) circle (0.5pt);
\filldraw [black] (4,3) node[anchor=south] {{\tiny $v_{5}$}};
\filldraw [black] (4,1) circle (0.5pt);
\filldraw [black] (4,1) node[anchor=north] {{\tiny $v_{4}$}};
\end{tikzpicture}
\caption{Differentiating regular vertex and single element fundamental set}
\label{fig:eg}
\end{figure}

\subsection{\texorpdfstring{A characterization of Serre's condition $(S_2)$}{Serre}}

For an arbitrary affine semigroup $S\subset\mathbb{Z}^{d}_{\geq 0}$, let $\{s_1,\dots,s_t\}$ be the minimal
finite subset of $S$ such that $ S =\{\sum_{i=1}^{t}z_{i} s_{i} \colon z_i \in \mathbb{Z}_{\geq 0}\}$.
Then $\{s_1,\dots,s_t\}\subset S$ is called the {\em minimal generating system} of $S$. 
Note that, for $\mathbb{B} \subset \mathbb{R}^d$, we denote $\mathbb{B} S=\{\sum_{i=1}^{t} b_{i}s_{i} : b_i \in \mathbb{B}\}$. 
Let $\mathbb{Z} S$ be the free abelian group generated by $S$ and $\mathbb{Q}_{\geq 0}S$ be the rational polyhedral cone generated by $S$. 
Let $\overline{S} = \mathbb{Q}_{\geq 0}S \cap \mathbb{Z} S$ be the {\em normalization} of $S$.
Analogous to the definition of normality of the affine semigroup $S$, we say that $S$ is normal if $S= \overline{S}$ holds. (See, e.g., \cite{BH}*{Section 6.1}.) 
 The set $F\subset S$ is said to be a {\em face} of $S$, if the following holds: $ s, s^{\prime} \in S,\ s + s^{\prime} \in F$ if and only if $s\in F$ and $s^{\prime}\in F$.
The dimension of a face $F$ is defined to be the rank of the free abelian group $\mathbb{Z} F$.
Throughout our study, we consider only {\em positive} affine semigroups, i.e., the minimal face of $S$ is $\{0\}$.

In the article \cite{Katt}, Katth\"{a}n gave a geometric description for the set of holes $\overline{S}\backslash S $, and this is connected to the ring-theoretical features of $S$, where the affine semigroups considered were not necessarily positive. We recall it here as it will be of great importance for our results.

\begin{theorem}[{\cite{Katt}*{Theorem 3.1}}]\label{thm:holes}
Let $S$ be an affine semigroup. 
Then there exists a (not-necessarily disjoint) decomposition 
\begin{equation}\label{eq:holes}
  \overline{S} \backslash   S = \bigcup\limits_{i=1}^{l}(s_i + \mathbb{Z} F_i) \cap \mathbb{Q}_{\geq 0} S
\end{equation}
with $s_i \in \overline{S}$ and faces $F_i$ of $S$. If no $s_i + \mathbb{Z} F_i$ can be omitted from the union, then the decomposition is unique.
\end{theorem}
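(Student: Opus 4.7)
The plan is to read off the decomposition from the $\mathbb{Z}^{d}$-graded structure of the $\mathbb{K}[S]$-module $M := \overline{\mathbb{K}[S]}/\mathbb{K}[S]$, where $d$ is the rank of the lattice $\mathbb{Z} S$. By Gordan's lemma, $\overline{S}$ is finitely generated as an $S$-module, so $M$ is a finitely generated $\mathbb{Z}^{d}$-graded module over $\mathbb{K}[S]$, and its graded support is exactly the set of holes $\overline{S} \setminus S$. Any combinatorial decomposition of $M$ extracted from standard graded module theory thus translates directly into a set-theoretic decomposition of $\overline{S} \setminus S$.

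First, I would apply the $\mathbb{Z}^{d}$-graded prime filtration theorem for finitely generated modules over positive affine semigroup rings. This yields a chain of graded submodules
\[
0 = M_{0} \subsetneq M_{1} \subsetneq \cdots \subsetneq M_{l} = M
\]
with $M_{j}/M_{j-1} \cong (\mathbb{K}[S]/\mathfrak{p}_{F_{j}})(-s_{j})$, where $\mathfrak{p}_{F_{j}}$ is the monomial prime of $\mathbb{K}[S]$ corresponding to the face $F_{j}$ of $S$ and the shift $s_{j}$ can be taken in $\overline{S}$. Reading off the graded supports of the subquotients and taking their union gives a preliminary cover $\overline{S} \setminus S = \bigcup_{j}(s_{j} + F_{j})$.

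The main obstacle is to thicken each piece $s_{j} + F_{j}$ into the full coset slice $(s_{j} + \mathbb{Z} F_{j}) \cap \mathbb{Q}_{\geq 0} S$ claimed in the theorem. To do this I would localize $M$ at each face prime $\mathfrak{p}_{F_{j}}$: the monomials $x^{f}$ with $f \in F_{j}$ become invertible in the localization, so the support of the corresponding filtration subquotient extends from $s_{j} + F_{j}$ to the full coset $s_{j} + \mathbb{Z} F_{j}$. Intersecting with the cone $\mathbb{Q}_{\geq 0} S$ restores the lattice condition that the elements lie in $\overline{S}$. The delicate check is that no point of the thickened slice slips into $S$: if $s_{j} + v \in S$ for some $v \in \mathbb{Z} F_{j}$ with $s_{j} + v \in \mathbb{Q}_{\geq 0}S$, one should exploit that $F_{j}$ is a face of $S$ (so that the local geometry at $F_{j}$ is well controlled) to force $s_{j} \in S$, contradicting that $s_{j}$ represents a hole. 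I expect this step to be the heart of the argument, since it is where the algebraic structure coming from the filtration meets the genuine cone geometry of $S$.

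Finally, for the uniqueness of the minimal decomposition, a standard minimality argument suffices: iteratively discard any piece $(s_{i} + \mathbb{Z} F_{i}) \cap \mathbb{Q}_{\geq 0} S$ contained in the union of the remaining ones. Two resulting minimal decompositions must coincide as collections of sets, because each slice is irreducible in the sense that it cannot be expressed as a finite union of strictly smaller slices of the permitted form, so each minimal piece is intrinsically determined by the geometry of $\overline{S} \setminus S$.
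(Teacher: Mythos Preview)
The paper does not prove this theorem at all: it is quoted verbatim from Katth\"an~\cite{Katt}*{Theorem~3.1} and used as a black box, so there is no ``paper's own proof'' to compare against.

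That said, your outline is essentially sound and is in the spirit of how such structure theorems for the hole set are established. One remark on the step you flagged as delicate: once the prime filtration gives you $s_{j}+F_{j}\subseteq\overline{S}\setminus S$ (because each graded subquotient has support contained in the support of $M$), the thickening is actually automatic. If $h=s_{j}+v\in S$ with $v\in\mathbb{Z}F_{j}$, write $v=f_{1}-f_{2}$ with $f_{1},f_{2}\in F_{j}$; then $s_{j}+f_{1}=h+f_{2}\in S$, contradicting $s_{j}+F_{j}\subseteq\overline{S}\setminus S$. So no localization argument is needed, and the ``face'' property of $F_{j}$ is not even invoked at this point---only that $F_{j}$ is a subsemigroup generating $\mathbb{Z}F_{j}$. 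Your uniqueness sketch via irreducibility of the coset slices is also the standard route.
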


The set $s_i + \mathbb{Z} F_i$ in \eqref{eq:holes} is called a $j$-{\em dimensional family of holes} of $S$, where $j=\dim F_{i}$. 

\begin{theorem}[{\cite{Katt}*{Theorem 5.2}}]\label{thm:S2holes}
Let $S$ be an affine semigroup of dimension $d$. 
Then the affine semigroup ring $\mathbb{K}[S]$ satisfies Serre's condition $(S_2)$ if and only if every family of holes of $S$ is of dimension $d - 1$.
\end{theorem}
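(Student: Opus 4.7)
The plan is to compare $\mathbb{K}[S]$ with its normalization $\overline{R} := \mathbb{K}[\overline{S}]$ through the short exact sequence
\[
0 \longrightarrow \mathbb{K}[S] \longrightarrow \overline{R} \longrightarrow M \longrightarrow 0,
\]
where $M := \overline{R}/\mathbb{K}[S]$ is a $\mathbb{Z} S$-graded $\mathbb{K}[S]$-module whose $\mathbb{K}$-basis is indexed by the set of holes $\overline{S}\setminus S$. By Hochster's theorem the normalization $\overline{R}$ is Cohen--Macaulay, and so for every face $F$ of $S$ with corresponding face prime $P_F \subset \mathbb{K}[S]$ one has $H^i_{P_F}(\overline{R}) = 0$ for all $i < d - \dim F$. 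First I would recall the standard local-cohomology reformulation of $(S_2)$: the ring $\mathbb{K}[S]$ satisfies $(S_2)$ if and only if $H^0_{P_F}(\mathbb{K}[S]) = H^1_{P_F}(\mathbb{K}[S]) = 0$ for every face $F$ with $\dim F \leq d-2$; since $\mathbb{K}[S]$ is a domain, the $H^0$ vanishing is automatic, and the content lies in the vanishing of $H^1_{P_F}(\mathbb{K}[S])$.

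Next, the long exact sequence in local cohomology associated to the short exact sequence above, together with the vanishing $H^0_{P_F}(\overline{R}) = H^1_{P_F}(\overline{R}) = 0$ for $\dim F \leq d-2$, yields a natural isomorphism $H^1_{P_F}(\mathbb{K}[S]) \cong H^0_{P_F}(M)$. Thus Serre's condition $(S_2)$ is equivalent to
\[
H^0_{P_F}(M) = 0 \quad \text{for every face } F \text{ of } S \text{ with } \dim F \leq d-2.
\]
Since $P_F$ is the monomial prime generated by $\{\mathbf{t}^s : s \in S \setminus F\}$, a $\mathbb{Z} S$-homogeneous class $\mathbf{t}^h \in M$ lies in $H^0_{P_F}(M)$ precisely when $h + u \in S$ for every $u$ in a sufficiently high power of the semigroup ideal generated by $S \setminus F$. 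Consequently the problem reduces to analyzing whether semigroup translates of a hole class can eventually land inside $S$.

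The main obstacle will be this combinatorial analysis, which uses the minimal decomposition $\overline{S} \setminus S = \bigcup_{i=1}^{l}(s_i + \mathbb{Z} F_i) \cap \mathbb{Q}_{\geq 0} S$ from Theorem~\ref{thm:holes}. The key dichotomy I would establish is the following: a family $s_i + \mathbb{Z} F_i$ of dimension exactly $d-1$ lies on a codimension-one affine hyperplane and can always be crossed by translation in a direction transverse to that hyperplane, so it contributes nothing to $H^0_{P_F}(M)$ for any $F$ with $\dim F \leq d-2$; by contrast, a family of dimension strictly less than $d-1$ can be trapped by choosing $F$ to be a suitable small face (essentially one whose linear span misses the asymptotic directions of the family), so that no translate by an element of $S \setminus F$ escapes, and the family then yields non-trivial classes in $H^0_{P_F}(M)$. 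The minimality clause of Theorem~\ref{thm:holes} is crucial at this point, to prevent different families from artificially cancelling in the computation. Combining both directions of this dichotomy with the previous reductions completes the proof of the equivalence.
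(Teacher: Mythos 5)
First, a point of order: the paper does not prove this statement at all --- it is imported verbatim from Katth\"an as a black box --- so there is no internal proof to compare against; I can only judge your proposal on its own terms. Your architecture is sound and is essentially the route Katth\"an himself takes: the exact sequence $0 \to \mathbb{K}[S] \to \mathbb{K}[\overline{S}] \to M \to 0$, Hochster's theorem for the normalization, the reduction of $(S_2)$ to the vanishing of $H^1_{P_F}(\mathbb{K}[S]) \cong H^0_{P_F}(M)$ at face primes with $\dim F \leq d-2$ (the reduction to \emph{graded} primes is a standard fact for $\mathbb{Z}^d$-graded modules, but you should cite or prove it rather than assert it), and a final combinatorial analysis of $\Gamma_{P_F}(M)$ in terms of the decomposition of Theorem~\ref{thm:holes}.

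The genuine gap is that your ``key dichotomy'' is stated with the logic inverted in both directions, and the hard half is missing. A class $\mathbf{t}^h$ lies in $\Gamma_{P_F}(M)$ precisely when \emph{every} sufficiently deep translate $h + u_1 + \cdots + u_n$ with $u_i \in S\setminus F$ lands in $S$. Hence, to show that a $(d-1)$-dimensional family contributes nothing, you must produce arbitrarily deep translates of $h$ that \emph{remain holes}; the correct mechanism is to translate \emph{along} $F_i$, picking $u \in F_i \setminus F$ (nonempty because $\dim F_i = d-1 > \dim F$ forces $F_i \not\subseteq F$), so that $h + nu$ stays in $(s_i + \mathbb{Z}F_i)\cap \mathbb{Q}_{\geq 0}S$ for all $n$. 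Your proposed move --- crossing the hyperplane ``in a direction transverse to it'' --- pushes the translate \emph{out} of the family, which is exactly what would place $\mathbf{t}^h$ \emph{into} $\Gamma_{P_F}(M)$, the opposite of what you want. Symmetrically, for a family with $\dim F_i \leq d-2$ the face to choose is $F = F_i$ itself, whose span \emph{contains} the asymptotic directions of the family; choosing an $F$ that ``misses the asymptotic directions,'' as you propose, makes $\Gamma_{P_F}$ blind to the family rather than detect it. Even with the right choice $F = F_i$, the claim that all deep translates transverse to $F_i$ eventually land in $S$ is not automatic --- a translate leaving one family can land in a \emph{different} family of holes --- and establishing it requires the finiteness and irredundancy of the decomposition in Theorem~\ref{thm:holes}, in effect the identification of the faces $F_i$ of the essential families with the associated primes of $M$. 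Your sketch gestures at minimality but supplies no argument here, and this is precisely the step that carries the weight of the theorem.
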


\subsection{Edge rings}
Let $G$ be a finite connected simple graph on the vertex set $V(G)=[d]$ with the edge set $E(G)=\{e_1, e_2, \dots, e_n\}$. We consider $\mathbb{K}[\mathbf{t}]:=\mathbb{K}[t_1, \dots, t_d]$, the polynomial ring in $d$ variables over the field $\mathbb{K}$. If $e=\{i, j\}\in E(G)$, then we define $\mathbf{t}^e \in \mathbb{K}[\mathbf{t}]$ for the quadratic monomial $t_{i}t_{j}$. We write $\mathbb{\mathbb{K}}[G]$ for the toric ring $\mathbb{K}[\{\mathbf{t}^e : e\in E(G)\}]$ and we call it the \textit{edge ring} of $G$. For more details, we refer to \cites{OH, HHO}.

We consider the canonical unit coordinate vectors of $\mathbb{R}^d$, denoted by $\mathbf{e}_1,  \dots, \mathbf{e}_d$. For an edge $e=\{i, j\}$, we define $\rho(e):= \mathbf{e}_i+\mathbf{e}_j$. Now we consider $\mathbb{A}_G:=\{\rho(e) : e\in E(G)\}$ and let $S_G$ be the affine semigroup generated by $\rho(e_1), \dots, \rho(e_n)$, so $S_G=\mathbb{Z}_{\geq 0}\mathbb{A}_G$. One can regard the edge ring $\mathbb{K}[G]$ as the affine semigroup ring of $S_G$. Consider the convex rational polyhedral cone spanned by $S_G$ in $\mathbb{Q}^d$, denoted by $\mathcal{C}_G$. We may assume that $\mathcal{C}_G$ is $d$-dimensional. The facets of $\mathcal{C}_G$ are given by the intersection of the half-spaces defined by the supported hyperplanes of $\mathcal{C}_G$ which are presented in the following result.

\begin{theorem}[\cite{OH}*{Theorem 1.7}]\label{supphyperplanes}
Let $G$ be a finite connected simple graph with $V(G)=[d]$ containing at least one odd cycle. Then all the supporting hyperplanes of $\mathcal{C}_G$ are given by: 
\begin{enumerate}
\item $\mathcal{H}_v= \{(x_1, \dots, x_d)\in \mathbb{R}^d : x_v=0\}$, where $v$ is a regular vertex in $G$;
\item $\mathcal{H}_T=\{(x_1, \dots, x_d)\in \mathbb{R}^d : \sum_{i\in T}x_i=\sum_{j\in N_{G}(T)}x_j\}$, where $T$ is a fundamental set in $G$.
\end{enumerate}
\end{theorem}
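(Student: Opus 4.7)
The plan is to prove the theorem in two directions. First, I verify that each hyperplane of the two listed types is a facet-defining supporting hyperplane of $\mathcal{C}_G$; then I show conversely that every facet-defining hyperplane arises in one of these two forms. A key tool throughout is the standard rank formula: for any graph $H$, the set $\{\rho(e) : e \in E(H)\}$ spans a subspace of $\mathbb{R}^{V(H)}$ of dimension $|V(H)| - b(H)$, where $b(H)$ is the number of bipartite connected components of $H$. In particular, this rank equals $|V(H)|$ exactly when every component of $H$ contains an odd cycle.

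For the first type, take $\ell_v(x) = x_v$ for a regular vertex $v$. Then $\ell_v(\rho(e)) \geq 0$, with equality iff $e \subset V(G)\setminus\{v\}$. Regularity of $v$ means every component of $G \setminus v$ contains an odd cycle, so by the rank formula the facet generators span a $(d-1)$-dimensional subspace, and $\mathcal{H}_v$ is a facet. For the second type, take $\ell_T(x) = \sum_{j \in N_G(T)} x_j - \sum_{i \in T} x_i$ for a fundamental set $T$. A short case analysis on where each edge sits relative to $T$, $N_G(T)$, and $R := V(G) \setminus (T \cup N_G(T))$ (using that $T$ is independent) shows $\ell_T(\rho(e)) \geq 0$, with equality exactly for edges bipartite between $T$ and $N_G(T)$ and for edges contained in $R$. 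The bipartite graph induced by $T$ is connected on $T \cup N_G(T)$, contributing rank $|T| + |N_G(T)| - 1$; the odd-cycle assumption on each component of $G_R$ contributes rank $|R|$. Since these two edge families have disjoint coordinate supports, the total rank is $d - 1$.

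For the converse, let $\ell = \sum_v a_v x_v$ define an arbitrary supporting hyperplane, normalized so $\ell \geq 0$ on $\mathcal{C}_G$. Set $T := \{v \in V(G) : a_v < 0\}$. The edge constraints $a_i + a_j \geq 0$ immediately force $T$ to be independent and $a_w > 0$ for every $w \in N_G(T)$. If $T = \emptyset$, every $a_v \geq 0$, so the facet generators are supported on $V_0 := \{v : a_v = 0\}$, forcing $|V_0| \geq d - 1$ for rank reasons. The case $|V_0| = d$ is ruled out since it gives $\ell \equiv 0$, so $|V_0| = d - 1$; the unique vertex $v_0 \notin V_0$ satisfies $\ell \propto x_{v_0}$, and the rank condition forces $v_0$ to be regular, recovering the first type.

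The main obstacle is the case $T \neq \emptyset$. Here the facet-equality edges split into those between $T$ and $N_G(T)$ (which require $a_w = |a_v|$) and those lying in $R_0 := \{u \in R : a_u = 0\}$. The crucial step is to exploit the uniqueness of the facet-defining functional up to positive scalar: if the bipartite graph of facet-equality edges were disconnected, or failed to cover some vertex of $T \cup N_G(T)$, one could perturb the coefficients on a single connected component (or at an uncovered vertex) without changing the set of facet edges, producing a non-proportional functional with the same vanishing locus on $\mathcal{C}_G$, a contradiction. Hence this bipartite graph is spanning and connected on $T \cup N_G(T)$, forcing a common value $|a_v| = a_w = \gamma > 0$; rescaling by $1/\gamma$ gives $\ell = \ell_T$. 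Finally, the rank condition on the $R$-coordinates forces $R_0 = R$ and every component of $G_R$ to contain an odd cycle, so $T$ satisfies all three defining properties of a fundamental set, recovering the second type.
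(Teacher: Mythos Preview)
The paper does not prove this statement: it is quoted verbatim as \cite{OH}*{Theorem~1.7} and used as a black box, with no argument supplied. There is therefore no ``paper's own proof'' to compare your proposal against.

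That said, your outline is essentially the original argument of Ohsugi and Hibi, and it is correct. The rank formula $|V(H)|-b(H)$, the sign-based partition $V(G)=T\sqcup N_G(T)\sqcup R$ with $T=\{v:a_v<0\}$, and the uniqueness-of-functional trick to force the facet-edge bipartite graph to be spanning and connected on $T\cup N_G(T)$ are exactly the standard ingredients. One point worth making explicit in the $T\neq\emptyset$ case: once the perturbation argument yields $\ell=\gamma\ell_T$, the set of facet-equality edges between $T$ and $N_G(T)$ becomes \emph{all} such edges (not merely those you began with), so the connectivity you established is precisely condition~(2) in the definition of a fundamental set; likewise, since every $u\in R$ then has $a_u=0$, every edge of $G_R$ is a facet edge, and the rank count $|T\cup N_G(T)|-1+|R|-b(G_R)=d-1$ forces $b(G_R)=0$, giving condition~(3).
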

 We denote by $F_v$ and $F_T$ respectively the facets of $\mathcal{C}_G$ corresponding to the supporting hyperplanes $\mathcal{H}_v$ and $\mathcal{H}_T$.

\begin{theorem}[\cite{OH}*{Theorem 2.2}]\label{normalization}
The normalization of the edge ring $\mathbb{K}[G]$ can be expressed as
\[
\overline{S_G}=S_G + \mathbb{Z}_{\geq 0}\{\mathbb{E}_C + \mathbb{E}_{C'} : (C, C') \text{ is an exceptional pair in } G \},
\]
where for any odd cycle $C$, we define $\mathbb{E}_C:=\sum_{i\in V(C)}\mathbf{e}_i$.
\end{theorem}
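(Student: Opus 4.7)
The plan is to prove the two inclusions of the equality separately. The forward inclusion is a direct verification on the generators, while the reverse inclusion is the substantive content and requires a combinatorial decomposition.

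For the inclusion $S_G + \mathbb{Z}_{\geq 0}\{\mathbb{E}_C + \mathbb{E}_{C'}\} \subseteq \overline{S_G}$, it suffices to check that each $\mathbb{E}_C + \mathbb{E}_{C'}$ with $(C,C')$ exceptional lies in $\overline{S_G} = \mathbb{Q}_{\geq 0} S_G \cap \mathbb{Z} S_G$. The identity $\sum_{e \in E(C)} \rho(e) = 2\mathbb{E}_C$ (each vertex of an odd cycle meets exactly two cycle-edges) gives
\[
\mathbb{E}_C + \mathbb{E}_{C'} = \tfrac{1}{2}\sum_{e \in E(C) \cup E(C')} \rho(e) \in \mathbb{Q}_{\geq 0} S_G.
\]
For the lattice condition, every generator $\rho(e)$ has coordinate sum $2$, so $\mathbb{Z} S_G$ is contained in the parity-even sublattice $L \subseteq \mathbb{Z}^d$; conversely, since $G$ is connected and contains an odd cycle, the alternating sum around any odd cycle produces $2\mathbf{e}_v \in \mathbb{Z} S_G$ for every vertex $v$ of that cycle, and transport via walks propagates this to all of $L$. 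Since $|V(C)|$ and $|V(C')|$ are both odd, $\mathbb{E}_C + \mathbb{E}_{C'}$ has even coordinate sum, and hence lies in $\mathbb{Z} S_G$.

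For the inclusion $\overline{S_G} \subseteq S_G + \mathbb{Z}_{\geq 0}\{\mathbb{E}_C + \mathbb{E}_{C'}\}$, I would take $\alpha \in \overline{S_G}$ and induct on $\sum_v \alpha_v$. Since $2\alpha \in S_G$, we may write $2\alpha = \sum_{e \in E(G)} c_e \rho(e)$ with $c_e \in \mathbb{Z}_{\geq 0}$; this defines a multigraph $H$ on $V(G)$ whose vertex-degrees $2\alpha_v$ are all even. If there is an edge $e \in E(G)$ with $\alpha - \rho(e) \in \overline{S_G}$, the inductive hypothesis applies. Otherwise, I would exploit the Eulerian structure of $H$: decompose it into edge-disjoint closed walks and, through a parity analysis of the odd cycles appearing in this decomposition, extract two vertex-disjoint odd cycles $C, C'$ with no bridge between them in $G$. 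Verifying $\alpha - (\mathbb{E}_C + \mathbb{E}_{C'}) \in \overline{S_G}$ then closes the induction.

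The main obstacle is to show that the extracted pair $(C, C')$ must be exceptional. Suppose instead that $V(C) \cap V(C') \neq \emptyset$ or that a bridge $\{u, u'\} \in E(G)$ connects them. In the shared-vertex case, writing $V(C) = \{v, a_1, \ldots, a_{2k}\}$, combine a perfect matching on the path $a_1 a_2 \cdots a_{2k}$ (using $k$ edges of $C$) with the two edges of $C'$ incident to $v$ and a perfect matching on the interior of $C' \setminus \{v\}$; a direct calculation gives $\mathbb{E}_C + \mathbb{E}_{C'}$ as a non-negative integer combination of $\rho(e)$'s, so $\mathbb{E}_C + \mathbb{E}_{C'} \in S_G$. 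The bridge case is analogous, using the bridge $\{u,u'\}$ in place of the shared vertex. Either outcome would permit a single-edge reduction from $\alpha$, contradicting the inductive setup. Combined with the constraint that $\alpha \in \mathbb{Z} S_G$ forces the number of odd cycles in any Eulerian decomposition of $H$ to be even, this guarantees that a valid exceptional pair can always be extracted at each inductive step, completing the proof.
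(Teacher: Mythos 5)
The paper does not prove this statement: it is quoted as background from Ohsugi--Hibi (\cite{OH}, Theorem 2.2; see also Simis--Vasconcelos--Villarreal), so there is no in-paper proof to compare yours against. Judged on its own, your easy inclusion is fine: $\sum_{e\in E(C)}\rho(e)=2\,\mathbb{E}_C$ places $\mathbb{E}_C+\mathbb{E}_{C'}$ in $\mathbb{Q}_{\geq 0}S_G$, and your identification of $\mathbb{Z}S_G$ with the even-coordinate-sum sublattice (alternating sums around an odd cycle give $2\mathbf{e}_v$, then transport along edges) correctly yields the lattice condition.

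The hard inclusion has a genuine gap at its very first step. ``Since $2\alpha\in S_G$'' is asserted without justification, and it is not a formality: for $\alpha\in\mathbb{Q}_{\geq 0}\mathbb{A}_G\cap\mathbb{Z}\mathbb{A}_G$ one only knows a priori that \emph{some} multiple $N\alpha$ lies in $S_G$. That $N=2$ suffices is equivalent to the half-integrality of basic solutions of $\{c\geq 0,\ Ac=\alpha\}$ for the vertex--edge incidence matrix $A$ (the support of a basic solution is a disjoint union of trees and unicyclic graphs whose cycles are odd); note that $2\,\overline{S_G}\subseteq S_G$ is itself a consequence of the theorem you are proving, so this step must be established independently, not assumed. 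Beyond that, the extraction step is under-argued as written: ``through a parity analysis \dots\ extract two vertex-disjoint odd cycles with no bridge'' is a placeholder rather than an argument. Your closing paragraph does contain the ingredients that make it work --- a shared vertex or a bridge forces $\mathbb{E}_C+\mathbb{E}_{C'}\in S_G$ and hence a single-edge reduction, and $\sum_v\alpha_v$ even forces an even number of odd cycles in any cycle decomposition of $H$, so there are either none (whence $\alpha\in S_G$) or at least two --- but you still need to record (i) that the two chosen odd cycles are edge-disjoint in $H$, so that $\alpha-\mathbb{E}_D-\mathbb{E}_{D'}$ remains in the cone, and (ii) that an exceptional pair must consist of \emph{minimal} (chordless) odd cycles, whereas cycles extracted from $H$ need not be chordless; one must pass to a chordless odd subcycle $C\subseteq D$ via $\mathbb{E}_D=\mathbb{E}_C+(\text{matching on the even remainder})\in \mathbb{E}_C+S_G$. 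With these repairs your outline becomes the standard argument, but as submitted the two gaps above are real.
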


We observe that, $2\big(\mathbb{E}_{C}+\mathbb{E}_{C^{\prime}}\big)=\big(\sum_{e\in E(C)}\rho(e)+\sum_{e^{\prime}\in E(C^{\prime})}\rho(e^{\prime})\big)\in S_{G}.$
The following result characterizes the normality of edge rings.
\begin{theorem}[\cite{OH}*{Theorem 2.2},\cite{svv}*{Theorem 1.1}]\label{thm:normal}   
Let G be a finite simple graph. Then the edge ring $\mathbb{K}[G]$ is normal if and only if $G$ satisfies the odd cycle condition.
\end{theorem}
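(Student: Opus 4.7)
The equivalence is essentially a corollary of Theorem~\ref{normalization}, which already describes $\overline{S_G}$ in terms of exceptional pairs; the only real work is to certify that the elements $\mathbb{E}_C + \mathbb{E}_{C'}$ are genuinely new.

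For the \emph{if} direction, suppose $G$ satisfies the odd cycle condition. Then by definition $G$ has no exceptional pair, so the index set appearing in Theorem~\ref{normalization} is empty and $\overline{S_G} = S_G$. Hence $\mathbb{K}[G]$ is normal.

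For the \emph{only if} direction, I would argue the contrapositive: assume $G$ contains an exceptional pair $(C, C')$, and show $\mathbb{E}_C + \mathbb{E}_{C'} \in \overline{S_G} \setminus S_G$. Membership in $\overline{S_G}$ is immediate either from Theorem~\ref{normalization} or from the identity $2(\mathbb{E}_C + \mathbb{E}_{C'}) \in S_G$ recorded just before Theorem~\ref{thm:normal}. The only nontrivial step, which I expect to be the main obstacle, is to show $\mathbb{E}_C + \mathbb{E}_{C'} \notin S_G$. Suppose for contradiction that $\mathbb{E}_C + \mathbb{E}_{C'} = \sum_{e \in E(G)} a_e\, \rho(e)$ with $a_e \in \mathbb{Z}_{\geq 0}$. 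Reading off the coordinate at any vertex $v \notin V(C) \cup V(C')$ forces $\sum_{e \ni v} a_e = 0$, hence $a_e = 0$ for every edge incident to such $v$. Thus every edge $e$ with $a_e > 0$ has both endpoints in $V(C) \cup V(C')$, and since $(C,C')$ is exceptional there is no edge of $G$ joining $V(C)$ and $V(C')$; consequently each such edge lies entirely inside $V(C)$ or entirely inside $V(C')$. Restricting the identity to the coordinates indexed by $V(C)$ gives $\mathbb{E}_C = \sum_{e \subset V(C)} a_e\, \rho(e)$, whose total coordinate sum on the right is even (each $\rho(e)$ contributes $2$) while on the left it equals $|V(C)|$, which is odd. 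This contradiction yields $\mathbb{E}_C + \mathbb{E}_{C'} \notin S_G$, so $S_G \subsetneq \overline{S_G}$ and $\mathbb{K}[G]$ is not normal. The parity argument hinges crucially on the absence of a bridge between $C$ and $C'$, which is precisely what forces the support of the conjectural expression to split cleanly across $V(C)$ and $V(C')$.
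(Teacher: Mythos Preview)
The paper does not supply a proof of this statement; it is quoted as a preliminary result from Ohsugi--Hibi and Simis--Vasconcelos--Villarreal, so there is nothing in the paper to compare against.

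Your argument is correct, and indeed is essentially how the two cited facts fit together: since Theorem~\ref{normalization} and Theorem~\ref{thm:normal} are both attributed to \cite{OH}*{Theorem 2.2}, deriving the latter from the former amounts to unpacking the same result. The parity argument you give for $\mathbb{E}_C + \mathbb{E}_{C'} \notin S_G$ is the standard one and is fine. One small point: the identity $2(\mathbb{E}_C + \mathbb{E}_{C'}) \in S_G$ by itself only yields $\mathbb{E}_C + \mathbb{E}_{C'} \in \mathbb{Q}_{\geq 0}S_G$; to land in $\overline{S_G} = \mathbb{Q}_{\geq 0}S_G \cap \mathbb{Z}S_G$ you also need $\mathbb{E}_C + \mathbb{E}_{C'} \in \mathbb{Z}\mathbb{A}_G$, which is true but is an additional (easy) check. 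Since you also invoke Theorem~\ref{normalization} at that step, the gap is already covered.
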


Observe that if $G$ satisfies the odd cycle condition, then $\mathbb{K}[G]$ is a Cohen--Macaulay ring, due to a result of Hochster~\cite{Hochster}*{Theorem 1}.

\section{On triangular cactus graphs}\label{sec:Cgraphs}

Let $G$ be a finite connected simple graph. 
A vertex $v\in V(G)$ is called a {\em cutpoint} if the subgraph $G\backslash v$ of $G$ has more connected components than that of $G$.
Recall that a vertex $v$ is {\em regular} in $G$ if every connected component of $G\backslash v$ contains at least one odd cycle.
If a cutpoint $v$ is regular in $G$, then $v$ is called a {\em regular cutpoint}.
We say that a connected graph without a cutpoint is {\em non-separable}. 
A maximal non-separable subgraph of $G$ is called a {\em block} of the graph $G$. 

A {\em cactus graph} is a connected graph in which every block is either an edge or a cycle. 
The cactus graph whose blocks are all $n$-cycles is defined as $n$-{\em cactus graph}. For more details on cactus graphs, we refer to \cite{RinaldoCactus}.
In this study, we will focus only on $3$-cactus graphs, which will be called {\em triangular cactus} graphs.

The $3$-cycles are triangular cactus graphs of diameter $1$.
The friendship graphs, formed from a collection of $3$-cycles joined together at a single common vertex, are triangular cactus graphs of diameter $2$. For edge ring related studies for this family of graphs, see \cite{HN}.
A generic illustration of a triangular cactus graph of diameter $3$ is shown in Figure~\ref{fig:cac3}.
Therefore, we observe that any triangular cactus graph of diameter less than or equal to 3 satisfies the odd cycle condition, and by Theorem~\ref{thm:normal}, it has a normal Cohen--Macaulay edge ring.

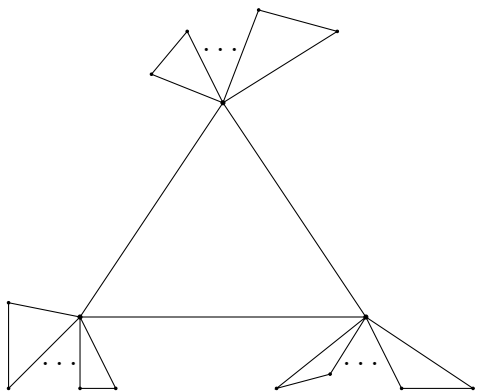
\begin{figure}[ht]
\centering
\begin{tikzpicture}[scale=0.95]
\draw[black, thin] (2,1) -- (6,1) -- (4,4)-- cycle;
\filldraw [black] (2,1) circle (0.8pt);
\draw[black, thin] (2,1) -- (1,0) -- (1,1.2)-- cycle;
\filldraw [black] (1,0) circle (0.5pt);
\filldraw [black] (1,1.2) circle (0.5pt);
\draw[black, thin] (2,1) -- (2,0) -- (2.5,0)-- cycle;
\filldraw [black] (2,0) circle (0.5pt);
\filldraw [black] (2.5,0) circle (0.5pt);
\path (1.5,0.2) -- node[auto=false]{$\dots$} (2,0.5);
\filldraw [black] (6,1) circle (0.8pt);
\draw[black, thin] (6,1) -- (4.75,0) -- (5.5,0.2)-- cycle;
\filldraw [black] (4.75,0) circle (0.5pt);
\filldraw [black] (5.5,0.2) circle (0.5pt);
\draw[black, thin] (6,1) -- (6.5,0) -- (7.5,0)-- cycle;
\filldraw [black] (6.5,0) circle (0.5pt);
\filldraw [black] (7.5,0) circle (0.5pt);
\path (5.93,0.2) -- node[auto=false]{$\dots$} (6,0.5);
\filldraw [black] (4,4) circle (0.8pt);
\draw[black, thin] (4,4) -- (3,4.4) -- (3.5,5)-- cycle;
\filldraw [black] (3,4.4) circle (0.5pt);
\filldraw [black] (3.5,5) circle (0.5pt);
\draw[black, thin] (4,4) -- (4.5,5.3) -- (5.6,5)-- cycle;
\filldraw [black] (4.5,5.3) circle (0.5pt);
\filldraw [black] (5.6,5) circle (0.5pt);
\path (4,5) -- node[auto=false]{$\dots$} (4,4.5);
\end{tikzpicture}
\caption{A general triangular cactus graph of diameter $3$}
\label{fig:cac3}
\end{figure}

We focus on the non-normal edge rings associated to triangular cactus graphs that satisfy $(S_2)$-condition.
A broader objective of this study is to prove the following conjecture:

\begin{conjecture}\label{cnj:cactus}
The edge ring associated to a triangular cactus graph of diameter $\geq 4$ satisfies $(S_{2})$-condition. 
\end{conjecture}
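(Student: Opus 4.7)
The plan is to mirror the strategy of Theorem~\ref{thm:cactusS2} but to organize the analysis around the \emph{block-cut tree} $T_G$ of the triangular cactus $G$. Since every block is a triangle, $T_G$ records the full combinatorial data of $G$, and increasing the diameter of $G$ means lengthening a path in $T_G$. By Theorem~\ref{thm:S2holes}, it suffices to show that the Katth\"an decomposition of $\overline{S_G}\setminus S_G$ given by Theorem~\ref{thm:holes} consists entirely of $(d-1)$-dimensional families of holes, i.e., each family is supported along a facet of $\mathcal{C}_G$ coming from a fundamental set of $G$ (as classified by Theorem~\ref{supphyperplanes}).

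First I would describe explicitly the regular cutpoints and the fundamental sets of $G$ in terms of $T_G$. A cutpoint $v$ is regular exactly when every component of $G\setminus v$ still contains a triangle, which translates to an interior-node condition in $T_G$; fundamental sets correspond to independent ``slices'' of $T_G$ whose removal leaves only triangle-rich components. Next, by Theorem~\ref{normalization}, $\overline{S_G}$ is generated over $S_G$ by the vectors $\mathbb{E}_C+\mathbb{E}_{C'}$ for each exceptional pair $(C,C')$, and in a triangular cactus these are precisely the pairs of vertex-disjoint triangles whose connecting path in $T_G$ has length at least two. To each such pair I would associate a separating fundamental set $T_{C,C'}$ and the corresponding facet $F_{T_{C,C'}}$ of $\mathcal{C}_G$, and aim to prove
\begin{equation*}
  \overline{S_G}\setminus S_G = \bigcup_{(C,C')\text{ exceptional}} \bigl((\mathbb{E}_C+\mathbb{E}_{C'}) + \mathbb{Z} F_{T_{C,C'}}\bigr) \cap \mathcal{C}_G,
\end{equation*}
so that Theorem~\ref{thm:holes} exhibits $\overline{S_G}\setminus S_G$ as a union of $(d-1)$-dimensional families of holes, whence Theorem~\ref{thm:S2holes} yields the $(S_2)$-condition.

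The main obstacle will be verifying this decomposition cleanly as the diameter grows. The inclusion that each translate really lies outside $S_G$ is the delicate direction: translation by an element of $F_{T_{C,C'}}$ must preserve the mod-$2$ obstruction across the separating fundamental set that keeps $\mathbb{E}_C+\mathbb{E}_{C'}$ outside $S_G$, and this has to be checked uniformly over all exceptional pairs. Completeness --- that every hole is covered by some exceptional pair --- I would approach by induction on the number of blocks: prune a pendant triangle attached at a single cutpoint to obtain a smaller triangular cactus $G'$, and track how the regular cutpoints, fundamental sets, and exceptional pairs transform. The subtlety is that pruning a pendant triangle can alter the regularity of its attachment cutpoint and can even convert non-exceptional pairs into exceptional ones, so the inductive bookkeeping is nontrivial. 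If induction proves too tangled, an alternative is a uniform combinatorial argument that localizes the analysis to each exceptional pair via its separating fundamental set, generalizing the two-type dichotomy of the diameter~$4$ proof to a classification indexed by the positions of $C$ and $C'$ in $T_G$.
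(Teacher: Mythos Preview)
The statement you are addressing is Conjecture~\ref{cnj:cactus}, and the paper does \emph{not} prove it: Theorem~\ref{thm:cactusS2} settles only the case $\diam\mathbf{G}=4$, and the general conjecture is left open (see Section~\ref{sec:conclude}). So there is no ``paper's own proof'' to compare against; your proposal is a research plan toward an open problem.

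That said, even as a plan your displayed decomposition is already wrong in the diameter-$4$ case that the paper does handle. Two features of Propositions~\ref{prop:hole1} and~\ref{prop:hole2} are missing from your formula. First, the shift vectors are not single terms $\mathbb{E}_C+\mathbb{E}_{C'}$: one needs the sums $q_{\mathcal{E}_p}=\sum_{i=1}^{p}(\mathbb{E}_{C_i}+\mathbb{E}_{C_{i'}})$ over collections of $p$ \emph{mutually} exceptional pairs, because by Lemma~\ref{lemm:cact01} adding two exceptional-pair vectors can land back in $S_G$ unless every cross-pairing is again exceptional. Second, in Type~1 the regular-vertex facet $F_w$ (not a fundamental-set facet) is indispensable in the decomposition~\eqref{eq:hole_cactus01}; your union over facets $F_{T_{C,C'}}$ coming only from fundamental sets would miss these holes. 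Thus your proposed equality fails already for $\diam G=4$, and any argument built on it cannot succeed.

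For larger diameter these difficulties compound: there are many internal cutpoints that may or may not be regular, so the two-type dichotomy of Section~\ref{sec:proof} becomes a branching classification, and the families $\mathcal{E}^p$ of mutually exceptional pairs interact across several cut vertices. Your induction-by-pruning idea is natural, but as you yourself note, pruning a pendant triangle changes the regularity status of its attachment cutpoint; controlling simultaneously the shift vectors $q_{\mathcal{E}_p}$ and the relevant facets (both $F_T$ and $F_v$) under this operation is precisely the unresolved obstacle.
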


We will exclusively study the family of triangular cactus graphs whose diameter is $4$ and prove Conjecture~\ref{cnj:cactus} for this specific family.
Let $\mathbf{G}$ be the triangular cactus graph of diameter $4$. For an illustration of $\mathbf{G}$ in general, see Figure~\ref{fig:cactgen}.

\begin{figure}[ht]
\centering
\begin{tikzpicture}
\path (3,0.2) -- node[auto=false]{$\dots$} (7,2.5);
\draw[black, thin] (3,0) -- (5,2) -- (2,3)-- cycle;
\filldraw [black] (3,0) circle (0.8pt);
\filldraw [black] (2,3) circle (0.8pt);
\filldraw [black] (3.5,4.2) circle (0.8pt);
\filldraw [black] (6.5,4.2) circle (0.8pt);
\filldraw [black] (8,3) circle (0.8pt);
\filldraw [black] (7,0) circle (0.8pt);
\filldraw [black] (3,0) node[anchor=west] {$x_i$};
\filldraw [black] (2,3) circle (0.8pt);
\path (3.4,3) -- node[auto=false]{$\vdots$} (4.6,2.8);
\draw[black, thin] (5,2) -- (3.5,4.2) -- (6.5,4.2)-- cycle;
\filldraw [black] (5,2) circle (1pt);
\filldraw [black] (5,2) node[anchor=north] {$w$};
\path (7,2.5) -- node[auto=false]{$\vdots$} (5,3.2);
\draw[black, thin] (5,2) -- (8,3) -- (7,0)-- cycle;
\draw[black, thin] (3,0) -- (3.2,-0.7) -- (3.5,-0.6)-- cycle;
\filldraw [black] (3.2,-0.7) circle (0.5pt);
\filldraw [black] (3.2,-0.6) node[anchor=north] {{\tiny $y_{i,2}$}};
\filldraw [black] (3.5,-0.6) circle (0.5pt);
\filldraw [black] (3.4,-0.6) node[anchor=west] {{\tiny $y_{i,1}$}};
\path (2.5,-0.5) -- node[auto=false]{$\dots$} (3.3,-0.5);
\draw[black, thin] (3,0) -- (2.3,-0.9) -- (1.7,-0.5)-- cycle;
\filldraw [black] (2.3,-0.9) circle (0.5pt);
\filldraw [black] (2.3,-0.9) node[anchor=north] {{\tiny $y_{i,2t-1}$}};
\filldraw [black] (1.7,-0.5) circle (0.5pt);
\filldraw [black] (1.7,-0.5) node[anchor=south] {{\tiny $y_{i,2t}$}};
\draw[black, thin] (3,0) -- (1.9,0.4) -- (2.4,0.6)-- cycle;
\path (2.5,-0.2) -- node[auto=false]{$\vdots$} (2,0.4);
\filldraw [black] (1.9,0.4) circle (0.5pt);
\filldraw [black] (1.97,0.4) node[anchor=east] {{\tiny $y_{i,2s_{i}-1}$}};
\filldraw [black] (2.4,0.6) circle (0.5pt);
\filldraw [black] (2.4,0.5)  node[anchor=south] {{\tiny $y_{i,2s_{i}}$}};
\draw[black, thin] (2,3) -- (1.2,2.3) -- (1.4,2)-- cycle;
\filldraw [black] (1.2,2.3) circle (0.5pt);
\filldraw [black] (1.4,2) circle (0.5pt);
\draw[black, thin] (2,3) -- (1.2,3.3) -- (1.4,3.7)-- cycle;
\filldraw [black] (1.2,3.3) circle (0.5pt);
\filldraw [black] (1.4,3.7) circle (0.5pt);
\path (1,3) -- node[auto=false]{$\vdots$} (2,3);
\draw[black, thin] (3.5,4.2) -- (2.6,4) -- (3,3.52)-- cycle;
\filldraw [black] (2.6,4) circle (0.5pt);
\filldraw [black] (3,3.52) circle (0.5pt);
\draw[black, thin] (3.5,4.2) -- (3.2,4.8) -- (3.8,5)-- cycle;
\filldraw [black] (3.2,4.8) circle (0.5pt);
\filldraw [black] (3.8,5) circle (0.5pt);
\path (3,4.5) -- node[auto=false]{$\vdots$} (3.5,4.5);
\draw[black, thin] (6.5,4.2) -- (6.6,3.4) -- (7,3.52)-- cycle;
\filldraw [black] (6.6,3.4) circle (0.5pt);
\filldraw [black] (7,3.52) circle (0.5pt);
\draw[black, thin] (6.5,4.2) -- (6.8,4.8) -- (7.5,4.8)-- cycle;
\filldraw [black] (6.8,4.8) circle (0.5pt);
\filldraw [black] (7.5,4.8) circle (0.5pt);
\path (6.7,4) -- node[auto=false]{$\vdots$} (7,4.5);
\draw[black, thin] (8,3) -- (7.8,3.4) -- (8.3,3.5)-- cycle;
\filldraw [black] (7.8,3.4) circle (0.5pt);
\filldraw [black] (8.3,3.5) circle (0.5pt);
\draw[black, thin] (8,3) -- (8.1,2.6) -- (8.5,2.4)-- cycle;
\filldraw [black] (8.1,2.6) circle (0.5pt);
\filldraw [black] (8.5,2.4) circle (0.5pt);
\path (8,2.7) -- node[auto=false]{$\vdots$} (8.5,3.7);
\draw[black, thin] (7,0) -- (6,-0.5) -- (6.5,-0.5)-- cycle;
\filldraw [black] (6,-0.5) circle (0.5pt);
\filldraw [black] (6.5,-0.5) circle (0.5pt);
\draw[black, thin] (7,0) -- (7.34,-0.5) -- (7.6,-0.3)-- cycle;
\filldraw [black] (7.34,-0.5) circle (0.5pt);
\filldraw [black] (7.6,-0.3) circle (0.5pt);
\path (6.36,-0.6) -- node[auto=false]{$\dots$} (7.6,0);
\end{tikzpicture}
\caption{A general form of a triangular cactus graph of diameter $4$}
\label{fig:cactgen}
\end{figure}
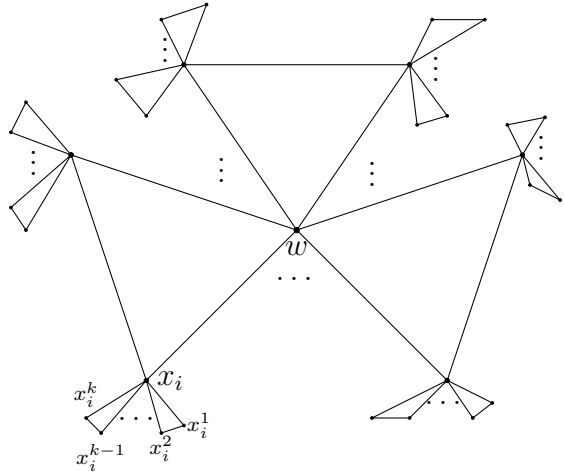

The main theorem of our study is as follows.

\begin{theorem}\label{thm:cactusS2}
Let $\mathbf{G}$ be a triangular cactus graph with $\diam{ \mathbf{G}} = 4$. 
Then the edge ring $\mathbb{K}[\mathbf{G}]$ is not normal, and satisfies the $(S_{2})$-condition.  
\end{theorem}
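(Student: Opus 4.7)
My plan is to decouple the two assertions and handle non-normality first. Because $\diam \mathbf{G} = 4$, there exist two triangular blocks $C$ and $C'$ of $\mathbf{G}$ not containing the central cutpoint $w$ and lying in distinct outer branches of $\mathbf{G}$ (typically of the form $\{x_{i}, y_{i,2k-1}, y_{i,2k}\}$ and $\{x_{j}, y_{j,2l-1}, y_{j,2l}\}$ with $i \neq j$ in the notation of Figure~\ref{fig:cactgen}). These triangles are vertex-disjoint by construction, and since every edge in a triangular cactus belongs to a triangular block, no edge can directly connect $V(C)$ to $V(C')$ without forcing a further triangle that the cactus structure forbids. Thus $(C, C')$ is an exceptional pair, and Theorem~\ref{thm:normal} gives that $\mathbb{K}[\mathbf{G}]$ is not normal.

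For the $(S_{2})$-condition I appeal to Katth\"an's criterion (Theorem~\ref{thm:S2holes}): it suffices to prove that every family of holes of $S_{\mathbf{G}}$ has dimension $d - 1$, where $d = |V(\mathbf{G})|$. By Theorem~\ref{normalization}, the set $\overline{S_{\mathbf{G}}} \setminus S_{\mathbf{G}}$ is generated over $S_{\mathbf{G}}$ by the vectors $\mathbb{E}_{C} + \mathbb{E}_{C'}$ attached to exceptional pairs $(C, C')$. The plan is then threefold: (i) enumerate all exceptional pairs of $\mathbf{G}$; (ii) enumerate the regular vertices and the fundamental sets of $\mathbf{G}$, which via Theorem~\ref{supphyperplanes} give the facets $F_{v}$ and $F_{T}$ of $\mathcal{C}_{\mathbf{G}}$; and (iii) match each class of holes arising in (i) to a facet from (ii) whose associated $(d-1)$-dimensional lattice absorbs all translations within that class.

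Tasks (i) and (ii) are sensitive to the local structure at $w$, so I split the argument according to whether $w$ is regular in $\mathbf{G}$. In \emph{Type I}, every triangle through $w$ carries at least one further triangle at one of its non-$w$ vertices, so every connected component of $\mathbf{G} \setminus w$ contains an odd cycle and $w$ is regular. In \emph{Type II}, some triangle through $w$ is a leaf triangle with no further triangle attached, so the corresponding component of $\mathbf{G} \setminus w$ carries only a single edge and $w$ is not regular. In each type I write down the regular vertices, the fundamental sets, and the exceptional pairs separately; Propositions~\ref{prop:hole1} and~\ref{prop:hole2} will then assemble the decompositions
\[
  \overline{S_{\mathbf{G}}} \setminus S_{\mathbf{G}} \;=\; \bigcup_{i} (s_{i} + \mathbb{Z} F_{i}) \cap \mathbb{Q}_{\geq 0} S_{\mathbf{G}}
\]
with every $F_{i}$ a facet of $S_{\mathbf{G}}$. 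Applying Theorem~\ref{thm:S2holes} then yields $(S_{2})$.

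The main obstacle lies in step (iii): for each exceptional pair $(C, C')$ and each $s \in S_{\mathbf{G}}$ for which $s + \mathbb{E}_{C} + \mathbb{E}_{C'}$ lies in $\overline{S_{\mathbf{G}}} \setminus S_{\mathbf{G}}$, I must produce a facet $F$ of $\mathcal{C}_{\mathbf{G}}$---most naturally $F_{T}$ for a fundamental set $T$ transverse to the support of $\mathbb{E}_{C} + \mathbb{E}_{C'}$---such that the entire class of such holes fits into a single $\mathbb{Z} F$-translate sitting inside $\mathbb{Q}_{\geq 0} S_{\mathbf{G}}$ and avoiding $S_{\mathbf{G}}$. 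Verifying that no lower-dimensional family of holes is needed, and that the candidate facets really absorb every translation required by the generators $\mathbb{E}_{C} + \mathbb{E}_{C'}$, is the technical heart of the proof and must be done type by type, using the explicit catalogue of fundamental sets produced for each type.
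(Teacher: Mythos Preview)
Your proposal is correct and follows essentially the same route as the paper: you establish non-normality via an exceptional pair forced by $\diam\mathbf{G}=4$, invoke Katth\"an's criterion (Theorem~\ref{thm:S2holes}), split into the two types according to whether $w$ is regular, and defer the explicit hole decomposition to Propositions~\ref{prop:hole1} and~\ref{prop:hole2}---exactly as the paper's proof of Theorem~\ref{thm:cactusS2} does. The only minor slack is in your non-normality paragraph: ``$i\neq j$'' and ``distinct outer branches'' are not quite enough, since $x_i$ and $x_j$ could lie in the same triangle through $w$ and then $\{x_i,x_j\}$ would be a bridge; you need $x_i$ and $x_j$ non-adjacent, which the diameter-$4$ hypothesis guarantees (and which the paper states explicitly).
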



\section{Towards the proof of Theorem~\ref{thm:cactusS2}}\label{sec:proof}

\hspace{0.5cm}This section centers on the proof of Theorem~\ref{thm:cactusS2}. Let $\mathbf{G}$ be the triangular cactus graph of diameter $4$ and let $|V(\mathbf{G})|=d$. Here, we will study the two types of graph $\mathbf{G}$ and collect the necessary results about them to finally prove our main theorem.

Throughout this paper, we label the vertices of a triangular cactus graph $\mathbf{G}$ as follows.
Let $n$ be a positive integer. The graph is constructed with a central vertex $w$ that is adjacent to $2n$ vertices, denoted as $x_{1}, x_{2}, \dots , x_{2n}$. Each of these vertices is part of a $3$-cycle containing $w$.
For each vertex $x_{i}$ (where $1\leq i\leq 2n$), we define $s_{i}\geq 0$ as the number of additional $3$-cycles that share $x_{i}$ but do not contain $w$.
The vertices of these cycles are labeled as $y_{i,1}, y_{i,2}, \dots , y_{i,2s_{i}}$.
A crucial property of our graph $\mathbf{G}$ is that, given the specified diameter of $4$, there exist at least two non-adjacent vertices $x_{i}$ and $x_{j}$ such that both $s_{i}$ and $s_{j}$ are at least $1$.
Consequently, an arbitrary $3$-cycle consisting of $x_{i}$ that excludes $w$ can be represented as $\{x_{i},y_{1,2t-1},y_{i,2t}\}$ for some $t\in [s_{i}]$ (see Figure~\ref{fig:cactgen}).


Recall that, by Theorem~\ref{supphyperplanes}, for any regular vertex $v$ and fundamental set $T$ of $\mathbf{G}$, we denote by $F_{v}$ and $F_{T}$ the facets of $\mathcal{C}_{\mathbf{G}}=\mathbb{Q}_{\geq 0}\mathbb{A}_{\mathbf{G}}$ corresponding to the hyperplanes $\mathcal{H}_{v}$ and $\mathcal{H}_{T}$, respectively.

The exceptional pairs in $\mathbf{G}$ are of the form $(C,C^{\prime})$ where $C=\{x_{p},y_{p,2t-1},y_{p,2t}\}$ and $C^{\prime}=\{x_{q},y_{q,2t^{\prime}-1},y_{q,2t^{\prime}}\}$ for some $p,q\in [2n]$, $t\in[s_{p}]$, and $t^{\prime}\in[s_{q}]$ such that $x_{p}\neq x_{q}$ and $\{x_{p}, x_{q}\}\notin E(\mathbf{G})$.
An illustration of an exceptional pair in $\mathbf{G}$ is shown in Figure~\ref{fig:cactexpl}. 

\begin{figure}[ht]
\centering
\begin{tikzpicture}
\path (3,0.2) -- node[auto=false]{$\dots$} (7,2.5);
\draw[black, thin] (3,0) -- (5,2) -- (2,3)-- cycle;
\filldraw [black, thick] (3,0) circle (0.8pt);
\filldraw [black] (2,3) circle (0.8pt);
\filldraw [black] (3.5,4.2) circle (0.8pt);
\filldraw [black, thick] (6.5,4.2) circle (0.8pt);
\filldraw [black] (7,0) circle (0.8pt);
\filldraw [black] (3,0) node[anchor=west] {$x_p$};
\path (3.4,3) -- node[auto=false]{$\vdots$} (4.6,2.8);
\draw[black, thin] (5,2) -- (3.5,4.2) -- (6.5,4.2)-- cycle;
\filldraw [black] (5,2) circle (1pt);
\filldraw [black] (5,2) node[anchor=north] {$w$};
\path (7,2.5) -- node[auto=false]{$\vdots$} (5,3.2);
\draw[black, thin] (5,2) -- (8,3) -- (7,0)-- cycle;
\draw[black, thin] (3,0) -- (3.2,-0.7) -- (3.5,-0.6)-- cycle;
\filldraw [black] (3.2,-0.7) circle (0.5pt);
\filldraw [black] (3.5,-0.6) circle (0.5pt);
\path (2.5,-0.5) -- node[auto=false]{$\dots$} (3.3,-0.5);
\draw[black, thick] (3,0) -- (2.4,-0.7) -- (2.2,-0.5)-- cycle;
\filldraw [black, thick] (2.4,-0.7) circle (0.5pt);
\filldraw [black] (2.4,-0.7) node[anchor=north] {{\tiny $y_{p,2t-1}$}};
\filldraw [black, thick] (2.2,-0.5) circle (0.5pt);
\filldraw [black] (2.1,-0.5) node[anchor=south] {{\tiny $y_{p,2t}$}};
\draw[black, thin] (3,0) -- (2.5,0.7) -- (2.2,0.5)-- cycle;
\path (2.7,0.2) -- node[auto=false]{$\vdots$} (2.3,0.1);
\filldraw [black] (2.5,0.7) circle (0.5pt);
\filldraw [black] (2.2,0.5) circle (0.5pt);
\draw[black, thin] (2,3) -- (1.2,2.3) -- (1.4,2)-- cycle;
\filldraw [black] (1.2,2.3) circle (0.5pt);
\filldraw [black] (1.4,2) circle (0.5pt);
\draw[black, thin] (2,3) -- (1.2,3.3) -- (1.4,3.7)-- cycle;
\filldraw [black] (1.2,3.3) circle (0.5pt);
\filldraw [black] (1.4,3.7) circle (0.5pt);
\path (1,3) -- node[auto=false]{$\vdots$} (2,3);
\draw[black, thin] (3.5,4.2) -- (2.6,4) -- (3,3.52)-- cycle;
\filldraw [black] (2.6,4) circle (0.5pt);
\filldraw [black] (3,3.52) circle (0.5pt);
\draw[black, thin] (3.5,4.2) -- (3.2,4.8) -- (3.8,5)-- cycle;
\filldraw [black] (3.2,4.8) circle (0.5pt);
\filldraw [black] (3.8,5) circle (0.5pt);
\path (3,4.5) -- node[auto=false]{$\vdots$} (3.5,4.5);
\draw[black, thin] (6.5,4.2) -- (6.6,3.4) -- (7,3.52)-- cycle;
\filldraw [black] (6.6,3.4) circle (0.5pt);
\filldraw [black] (7,3.52) circle (0.5pt);
\draw[black, thick] (6.5,4.2) -- (6.8,4.8) -- (7.5,4.8)-- cycle;
\filldraw [black] (6.3,4.3) node[anchor=north] {$x_{q}$};
\filldraw [black, thick] (6.8,4.8) circle (0.5pt);
\filldraw [black] (6.8,4.7) node[anchor=south] {{\tiny $y_{q,2t^{\prime}-1}$}};
\filldraw [black, thick] (7.5,4.8) circle (0.5pt);
\filldraw [black] (7.5,4.8) node[anchor=west] {{\tiny $y_{q,2t^{\prime}}$}};
\path (6.7,4) -- node[auto=false]{$\vdots$} (7,4.5);

\draw[black, thin] (6.5,4.2) -- (5.3,4.5) -- (5.6,5)-- cycle;
\filldraw [black] (5.3,4.5) circle (0.5pt);
\filldraw [black] (5.6,5) circle (0.5pt);
\path (6.5,4.2) -- node[auto=false]{$\dots$} (6.4,5);
\draw[black, thin] (8,3) -- (7.8,3.4) -- (8.3,3.5)-- cycle;
\draw[black, thin] (8,3) -- (8.1,2.6) -- (8.5,2.4)-- cycle;
\filldraw [black] (8.1,2.6) circle (0.5pt);
\filldraw [black] (8.5,2.4) circle (0.5pt);
\path (8,2.7) -- node[auto=false]{$\vdots$} (8.5,3.7);
\filldraw [black] (8,3) circle (0.8pt);
\filldraw [black] (7.8,3.4) circle (0.5pt);
\filldraw [black] (8.3,3.5) circle (0.5pt);
\draw[black, thin] (7,0) -- (6,-0.5) -- (6.5,-0.5)-- cycle;
\filldraw [black] (6,-0.5) circle (0.5pt);
\filldraw [black] (6.5,-0.5) circle (0.5pt);
\draw[black, thin] (7,0) -- (7.34,-0.5) -- (7.6,-0.3)-- cycle;
\filldraw [black] (7.34,-0.5) circle (0.5pt);
\filldraw [black] (7.6,-0.3) circle (0.5pt);
\path (6.36,-0.6) -- node[auto=false]{$\dots$} (7.6,0);
\end{tikzpicture}
\caption{Illustration of an exceptional pair}
\label{fig:cactexpl}
\end{figure}
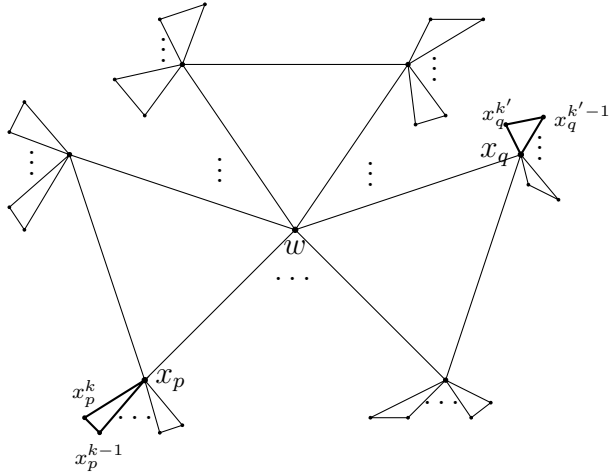

For any odd cycle $C$ in $\mathbf{G}$, recall that $\mathbb{E}_{C} \coloneqq \sum_{i\in V(C)}\mathbf{e}_{i},$ where $\mathbf{e}_{i}$ represents the $i$-th canonical vector of $\mathbb{R}^{d}$. 

\begin{lemma}\label{lemm:cact01}
Let $(C_{i},C_{i^{\prime}})$ and $(C_{j},C_{j^{\prime}})$ be any two exceptional pairs in $\mathbf{G}$. Then $$\mathbb{E}_{C_i}+\mathbb{E}_{C_{i^{\prime}}}+\mathbb{E}_{C_{j}}+\mathbb{E}_{C_{j^{\prime}}}\in S_{\mathbf{G}}$$ if and only if both $(C_{i},C_{j})$ and $(C_{i^{\prime}},C_{j^{\prime}})$ $\Big( \text{ or }(C_{i},C_{j^{\prime}})$ and $(C_{i^{\prime}},C_{j})\Big)$ are not exceptional.
\end{lemma}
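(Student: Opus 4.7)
The plan is to prove the two directions separately, using as a key tool an auxiliary observation: if $(C, C')$ is a pair of triangles of $\mathbf{G}$ that is \emph{not} exceptional---so either $V(C) \cap V(C') \neq \emptyset$ or there is a bridge joining them---then $\mathbb{E}_C + \mathbb{E}_{C'} \in S_{\mathbf{G}}$. The proof of this splits into two cases. If $C$ and $C'$ share a vertex $v^*$, write $V(C) = \{v^*, a_1, a_2\}$ and $V(C') = \{v^*, b_1, b_2\}$; then $\mathbb{E}_C + \mathbb{E}_{C'} = \rho(\{a_1, a_2\}) + \rho(\{v^*, b_1\}) + \rho(\{v^*, b_2\})$, a $\mathbb{Z}_{\geq 0}$-combination of edge vectors. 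If instead $V(C) = \{u, a_1, a_2\}$ and $V(C') = \{v, b_1, b_2\}$ are disjoint but $\{u, v\}$ is a bridge, then $\mathbb{E}_C + \mathbb{E}_{C'} = \rho(\{u, v\}) + \rho(\{a_1, a_2\}) + \rho(\{b_1, b_2\})$. Either way, the sum lies in $S_{\mathbf{G}}$. The ``if'' direction of the lemma is then immediate: if both $(C_i, C_j)$ and $(C_{i'}, C_{j'})$ are non-exceptional (respectively, both $(C_i, C_{j'})$ and $(C_{i'}, C_j)$), then the two corresponding sums are in $S_{\mathbf{G}}$, hence so is their total.

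For the ``only if'' direction, I would write the hypothesis as $\mathbb{E}_{C_i} + \mathbb{E}_{C_{i'}} + \mathbb{E}_{C_j} + \mathbb{E}_{C_{j'}} = \sum_{e \in E(\mathbf{G})} \lambda_e\, \rho(e)$ with $\lambda_e \in \mathbb{Z}_{\geq 0}$, and extract degree conditions vertex by vertex. Denote the two non-$x$ vertices of $C_k$ by $y_{k,1}$ and $y_{k,2}$. Since $w$ appears in none of the four cycles, $\lambda_{\{w,x\}}=0$ for every $x$ adjacent to $w$. For each $k$, the vertices $y_{k,1}$ and $y_{k,2}$ require degree $1$; as the only edges incident to them are $\{y_{k,1}, y_{k,2}\}$ and $\{y_{k,\ell}, x_{p_k}\}$, this forces exactly one of two local patterns for $C_k$: Option~A, using only $\{y_{k,1}, y_{k,2}\}$ and contributing $0$ to the degree at $x_{p_k}$; or Option~B, using both $\{y_{k,1}, x_{p_k}\}$ and $\{y_{k,2}, x_{p_k}\}$ and contributing $2$ at $x_{p_k}$. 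Since $w$-incident edges are forbidden, the only remaining edge that can contribute to the degree at $x_p$ is the sibling edge $\{x_p, x_{p^*}\}$. Writing $m_p = |\{k : p_k = p\}|$, $|B_p|$ for the number of these $k$'s in Option~B, and $\beta_{p, p^*}$ for the multiplicity of $\{x_p, x_{p^*}\}$, the degree equation at $x_p$ becomes $\beta_{p, p^*} = m_p - 2|B_p|$, with $\beta_{p, p^*} \geq 0$ and with $\beta_{p, p^*} = 0$ whenever $p^*$ is not among the four $x$-vertices.

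Finally, a short case analysis on the multiset $\{x_{p_i}, x_{p_{i'}}, x_{p_j}, x_{p_{j'}}\}$ finishes the argument, using that $x_{p_i} \neq x_{p_{i'}}$, $\{x_{p_i}, x_{p_{i'}}\} \notin E(\mathbf{G})$, and analogously for $(C_j, C_{j'})$. If all four $x$-vertices are distinct, each $m_{p_k} = 1$ forces $k$ into Option~A with $\beta_{p_k, p_k^*} = 1$; the boundary condition then forces the four vertices to split into two sibling pairs. The exceptional hypotheses exclude sibling relations within either of the given pairs, so the sibling pairs must cross them, producing the bridges $\{x_{p_i}, x_{p_j}\}, \{x_{p_{i'}}, x_{p_{j'}}\}$ (or the swapped version), as required. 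If instead some $x$-vertices coincide, the coincidence itself already yields a shared vertex between two cycles from opposite given pairs, and a brief sibling analysis at the remaining $x$-vertex supplies the matching non-exceptional partner. The main obstacle is organizing this case analysis cleanly: one must verify that in every admissible configuration the forced sibling structure pairs cycles across the two given exceptional pairs, never within them.
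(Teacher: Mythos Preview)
Your proposal is correct and follows essentially the same route as the paper: both directions hinge on the fact that the $w$-coordinate of the sum is zero (forcing all $w$-incident edges out of any $S_{\mathbf{G}}$-decomposition) together with the block structure of the triangular cactus, which then forces the four $x$-vertices to pair up either by coincidence or by a sibling edge. Your Option~A/Option~B dichotomy and the sibling bookkeeping $\beta_{p,p^*}=m_p-2|B_p|$ make explicit what the paper states more tersely as ``$x_i,x_{i'}\in\{x_j,x_{j'}\}\cup N_{\mathbf{G}}(\{x_j,x_{j'}\})$''; the backward direction is identical in spirit, with your auxiliary observation packaging the paper's case-by-case decomposition into a single reusable claim.
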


\begin{proof}
Let us consider $C_{p}=\{x_{p},y_{p,2t_{p}-1},y_{p,2t_{p}}\}$, where $p\in\{i,i^{\prime},j,j^{\prime}\}$ and for each $p$, $t_{p}\in [s_{p}]$. 
We know that the pair $(C_{p},C_{q})$ is exceptional in $\mathbf{G}$ only if both $x_{p}\neq x_{q}$ and $\{x_{p}, x_{q}\}\notin E(\mathbf{G})$.
\medskip

($\implies$)
Let us assume that $\mathbb{E}_{C_i}+\mathbb{E}_{C_{i^{\prime}}}+\mathbb{E}_{C_{j}}+\mathbb{E}_{C_{j^{\prime}}}\in S_{\mathbf{G}}$.
This implies that $\mathbb{E}_{C_i}+\mathbb{E}_{C_{i^{\prime}}}+\mathbb{E}_{C_{j}}+\mathbb{E}_{C_{j^{\prime}}}$ is a linear combination of $\rho(e)$ for some $e\in E(\mathbf{G})$.

\noindent
According to the structure of the graph $\mathbf{G}$, $\{y_{u,k},y_{v,k^{\prime}}\}\notin E(\mathbf{G})$, for any distinct $u, v\in [2n]$, $k\in[2s_{u}]$, and $k^{\prime}\in[2s_{v}]$. 
Moreover, for the exceptional pairs $(C_{i},C_{i^{\prime}})$ and $(C_{j},C_{j^{\prime}})$, we know that $x_{i}\neq x_{i^{\prime}}$, $x_{j}\neq x_{j^{\prime}}$, $\{x_{i}, x_{i^{\prime}}\}\notin E(\mathbf{G})$, and $\{x_{j}, x_{j^{\prime}}\}\notin E(\mathbf{G})$. 
Therefore, $\mathbb{E}_{C_i}+\mathbb{E}_{C_{i^{\prime}}}+\mathbb{E}_{C_{j}}+\mathbb{E}_{C_{j^{\prime}}}$ is a $\mathbb{Z}_{\geq 0}$-linear combination of $\rho(e)$ for some $e\in E(\mathbf{G})$ only if both of the vertices $x_{i}$ and $x_{i^{\prime}}$ are such that 
\[
x_{i},x_{i^{\prime}}\in \{ x_{j},x_{j^{\prime}}\}\cup N_{\mathbf{G}}\big(\{ x_{j},x_{j^{\prime}}\}\big).
\]
This also implies $ x_{j},x_{j^{\prime}}\in \{ x_{i},x_{i^{\prime}}\}\cup N_{\mathbf{G}}\big(\{ x_{i},x_{i^{\prime}}\}\big).$
That is, either both $\mathbb{E}_{C_i}+\mathbb{E}_{C_{j}}\in S_{\mathbf{G}}$ and $\mathbb{E}_{C_{i^{\prime}}}+\mathbb{E}_{C_{j^{\prime}}}\in S_{\mathbf{G}}$,
or both $\mathbb{E}_{C_i}+\mathbb{E}_{C_{j^{\prime}}}\in S_{\mathbf{G}}$ and $\mathbb{E}_{C_{i^{\prime}}}+\mathbb{E}_{C_{j}}\in S_{\mathbf{G}}$.
Therefore, in the given odd cycles $C_{p}$ ($p= i,i^{\prime},j,j^{\prime}$), there exist two pairs of odd cycles that are not exceptional.
\medskip

($\impliedby$) Let us assume that both the pairs $(C_{i},C_{j})$ and  $(C_{i^{\prime}},C_{j^{\prime}})$, are not exceptional. 
Therefore, we have one of the following cases:
\begin{itemize}
    \item $V(C_{i})\cap V(C_{j})\neq \emptyset$ and $V(C_{i^{\prime}})\cap V(C_{j^{\prime}})\neq \emptyset$; 
    \item there exists a bridge between $C_{i},C_{j}$ and between $C_{i^{\prime}},C_{j^{\prime}}$;
    \item $V(C_{i})\cap V(C_{j})\neq \emptyset$ and  there exists a bridge between $C_{i^{\prime}},C_{j^{\prime}}$ or vice versa.
\end{itemize}

\noindent
In any of these cases, the method of the proof remains the same and therefore, without loss of generality, we assume that $C_{i}, C_{j}$ shares a common vertex, i.e., $x_{i}=x_{j}$ and there exists a bridge between $C_{i^{\prime}}, C_{j^{\prime}}$, i.e., $\{x_{i^{\prime}},x_{j^{\prime}}\}\in E(\mathbf{G})$. Hence,
\medskip

$\begin{array} {lcl}  \mathbb{E}_{C_i}+\mathbb{E}_{C_{i^{\prime}}}+\mathbb{E}_{C_{j}}+\mathbb{E}_{C_{j^{\prime}}}& = & (\mathbf{e}_{x_{i}}+\mathbf{e}_{y_{i,2t_{i}}}) + (\mathbf{e}_{x_{i}}+\mathbf{e}_{y_{i,2t_{i}-1}})\\
& &+ (\mathbf{e}_{y_{j,2t_{j}}}+\mathbf{e}_{y_{j,2t_{j}-1}}) + (\mathbf{e}_{x_{i^{\prime}}}+\mathbf{e}_{x_{j^{\prime}}}) \\
& & +  (\mathbf{e}_{y_{i^{\prime},2t_{i^{\prime}}}}+\mathbf{e}_{y_{i^{\prime},2t_{i^{\prime}}-1}}) + (\mathbf{e}_{y_{j^{\prime},2t_{j^{\prime}}}}+\mathbf{e}_{y_{j^{\prime},2t_{j^{\prime}}-1}})  .
\end{array}$ 
\medskip

\noindent
This implies, $\mathbb{E}_{C_i}+\mathbb{E}_{C_{i^{\prime}}}+\mathbb{E}_{C_{j}}+\mathbb{E}_{C_{j^{\prime}}}$ is a linear combination of $\rho(e)$ for some $e\in E(\mathbf{G})$. Thus,
$\mathbb{E}_{C_i}+\mathbb{E}_{C_{i^{\prime}}}+\mathbb{E}_{C_{j}}+\mathbb{E}_{C_{j^{\prime}}}\in S_{\mathbf{G}}$.
\end{proof}

\begin{lemma}\label{lemm:cact02}
Let $(C_{i},C_{i^{\prime}})$ be an exceptional pair in $\mathbf{G}$. For any edge $\{u,v\}\in E(\mathbf{G})$, we have that
$$\mathbb{E}_{C_{i}}+\mathbb{E}_{C_{i^{\prime}}}+\mathbf{e}_{u}+\mathbf{e}_{v}\in S_{\mathbf{G}}$$
if and only if one of the vertices $u$ or $v$ is the vertex $w$, while the other vertex belongs to the set $V(C_{i})\cup V(C_{i^{\prime}})\cup N_{\mathbf{G}}\big(V(C_{i})\cup V(C_{i^{\prime}})\big)$.
\end{lemma}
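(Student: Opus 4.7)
The approach is to analyze when $\mathbb{E}_{C_i}+\mathbb{E}_{C_{i'}}+\mathbf{e}_u+\mathbf{e}_v$ can be written as $\sum c_e\,\rho(e)$ with $c_e\in\mathbb{Z}_{\geq 0}$; since the coordinate sum is $8$, such an expression necessarily uses exactly four edge vectors. Set $a:=y_{i,2t_i-1}$, $b:=y_{i,2t_i}$, $c:=y_{i',2t_{i'}-1}$, $d:=y_{i',2t_{i'}}$; by the construction of $\mathbf{G}$, each of $a,b,c,d$ has only two neighbors in $\mathbf{G}$, namely the corresponding $x$-vertex and the other $y$-vertex of the same triangle. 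Write $x_i^\star$ and $x_{i'}^\star$ for the unique $x$-vertices adjacent to $x_i$ and $x_{i'}$ respectively (their partners in the triangles through $w$). Because $(C_i,C_{i'})$ is exceptional, $\{x_i,x_{i'}\}\notin E(\mathbf{G})$, so $x_i^\star\neq x_{i'}$ and the only common neighbor of $x_i$ and $x_{i'}$ in $\mathbf{G}$ is $w$.

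For the direction $(\impliedby)$, I would assume without loss of generality that $u=w$ and $v\in V(C_i)\cup V(C_{i'})\cup N_{\mathbf{G}}(V(C_i)\cup V(C_{i'}))$. Since $\{w,v\}\in E(\mathbf{G})$ and $N_{\mathbf{G}}(w)=\{x_j:j\in[2n]\}$, necessarily $v=x_j$ for some $j$; intersecting with the hypothesis set reduces $v$ to one of $x_i,x_{i'},x_i^\star,x_{i'}^\star$. An explicit decomposition can be written in each of these four cases: for instance, when $v=x_i^\star$,
\[\mathbb{E}_{C_i}+\mathbb{E}_{C_{i'}}+\mathbf{e}_w+\mathbf{e}_{x_i^\star}=\rho(\{x_i,x_i^\star\})+\rho(\{a,b\})+\rho(\{w,x_{i'}\})+\rho(\{c,d\}),\]
and when $v=x_i$,
\[\mathbb{E}_{C_i}+\mathbb{E}_{C_{i'}}+\mathbf{e}_w+\mathbf{e}_{x_i}=\rho(\{x_i,a\})+\rho(\{x_i,b\})+\rho(\{w,x_{i'}\})+\rho(\{c,d\}),\]
with the cases $v=x_{i'}$ and $v=x_{i'}^\star$ handled symmetrically.

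For the direction $(\implies)$, I would fix a decomposition into four edge vectors and carry out case analysis on the position of $\{u,v\}$ relative to $\{a,b,c,d,x_i,x_{i'}\}$. The key coefficient mechanism is: if $u,v\notin\{a,b\}$, then the coordinate of $a$ in the sum equals $1$, and since $a$ has only the two incident edges $\{x_i,a\}$ and $\{a,b\}$, exactly one of them appears in the decomposition; the same dichotomy applies to $b$. If $\{x_i,a\}$ and $\{x_i,b\}$ both appear, the coordinate of $x_i$ exceeds $1$ and therefore $u$ or $v$ must equal $x_i$; otherwise $\{a,b\}$ appears, and symmetrically $\{c,d\}$ appears. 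In the generic case, the remaining two edges express $\mathbf{e}_{x_i}+\mathbf{e}_{x_{i'}}+\mathbf{e}_u+\mathbf{e}_v$ as a sum of two edge vectors, and since $\{x_i,x_{i'}\}\notin E(\mathbf{G})$, the only viable pairing is $\{x_i,u\},\{x_{i'},v\}\in E(\mathbf{G})$ (up to swapping $u$ and $v$); combined with $\{u,v\}\in E(\mathbf{G})$ and the fact that $w$ is the unique common neighbor of $x_i$ and $x_{i'}$, this forces $\{u,v\}\in\{\{w,x_i^\star\},\{w,x_{i'}^\star\}\}$. The residual subcases, where $u$ or $v$ lies in $\{x_i,x_{i'}\}$ or in $\{a,b,c,d\}$, either reduce by the same coefficient mechanism to $\{u,v\}=\{w,x_i\}$ or $\{w,x_{i'}\}$, or produce a coordinate-count contradiction at $x_i$ or $x_{i'}$ that rules them out.

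The main obstacle will be organizing the forward-direction case analysis compactly. The essential combinatorial ingredients are the rigid two-neighbor structure of the leaves $a,b,c,d$ and the fact that two non-adjacent $x$-vertices of $\mathbf{G}$ admit only $w$ as a common neighbor; once these are in place, every subcase closes by a short coefficient count in the spirit of the proof of Lemma~\ref{lemm:cact01}.
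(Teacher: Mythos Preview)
Your proposal is correct and follows essentially the same approach as the paper: both arguments count that exactly four edge vectors appear, exploit the two-neighbor rigidity of the $y$-leaves, and reduce to checking which $x$-vertices near $C_i$ and $C_{i'}$ can be joined to $w$. The only organizational difference is that the paper asserts up front that the four edges must split as $(\mathbb{E}_{C_i}+\mathbf{e}_u)+(\mathbb{E}_{C_{i'}}+\mathbf{e}_v)$ (or the swap) and then analyzes each piece, whereas you derive this split explicitly via your leaf-coordinate dichotomy; your route is slightly more laborious but also more fully justified.
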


\begin{proof}
Let $C_{i}=\{x_{i},y_{i,2t_{i}-1},y_{i,2t_{i}}\}$ and $C_{i^{\prime}}=\{x_{i^{\prime}},y_{i^{\prime},2t_{i^{\prime}}-1},y_{i^{\prime},2t_{i^{\prime}}}\}$, for some $i,i^{\prime}\in [2n]$ with $t_{i}\in [s_{i}]$ and $t_{i^{\prime}}\in [s_{i^{\prime}}]$. 
We know that the pair $(C_{i},C_{i^{\prime}})$ is exceptional in $\mathbf{G}$ only if both
$V(C_{i})\cap V(C_{i^{\prime}})=\emptyset$ and  
$N_{\mathbf{G}}(V(C_{i}))\cap N_{\mathbf{G}}(V(C_{i^{\prime}}))=\{w\}$ (see Figure~\ref{fig:cactexpl}).
\medskip

($\implies$) Let us assume that for some edge $\{u,v\}\in E(\mathbf{G})$, 
\begin{equation}\label{eq:cact02}
\mathbb{E}_{C_{i}}+\mathbb{E}_{C_{i^{\prime}}}+\mathbf{e}_{u}+\mathbf{e}_{v}\in S_{\mathbf{G}}.   
\end{equation}
Since $(C_{i},C_{i^{\prime}})$ is exceptional in $\mathbf{G}$, we have $(\mathbb{E}_{C_{i}}+\mathbb{E}_{C_{i^{\prime}}})\notin S_{\mathbf{G}}$.
Let $(\mathbb{E}_{C})_{j}$ denote the $j$-th coordinate of $\mathbb{E}_{C}=\sum_{i\in V(C)}\mathbf{e}_{i},$ for any odd cycle $C$ in $\mathbf{G}$. It follows that $\sum_{j=1}^{d}(\mathbb{E}_{C})_{j}=3$, for any odd cycle $C$ in $\mathbf{G}$. 
Therefore for \eqref{eq:cact02} to be true, we require either
\begin{itemize}
    \item $\mathbb{E}_{C_{i}}+\mathbf{e}_{u}\in S_{\mathbf{G}}$ and $\mathbb{E}_{C_{i^{\prime}}}+\mathbf{e}_{v}\in S_{\mathbf{G}}$; or 
    \item $\mathbb{E}_{C_{i}}+\mathbf{e}_{v}\in S_{\mathbf{G}}$ and $\mathbb{E}_{C_{i^{\prime}}}+\mathbf{e}_{u}\in S_{\mathbf{G}}$.
\end{itemize}
In any of these cases, the proof method remains consistent. Therefore, without loss of generality, we can assume that
$\mathbb{E}_{C_{i}}+\mathbf{e}_{u}\in S_{\mathbf{G}}$ and
$\mathbb{E}_{C_{i^{\prime}}}+\mathbf{e}_{v}\in S_{\mathbf{G}}$.
This implies, $u\in V(C_{i})\cup N_{\mathbf{G}}\big(V(C_{i})\big)$ and $v\in V(C_{i^\prime})\cup N_{\mathbf{G}}\big(V(C_{i^{\prime}})\big)$.
For clarity, we will denote $x_{\hat{i}}$ as the vertex adjacent to $x_{i}$ that belongs to the $3$-cycle containing $w$.
In other words, $\{x_{i},x_{\hat{i}}\}\in E(\mathbf{G})$ and both $x_{\hat{i}}$ and $x_{i}$ are part of a $3$-cycle that includes the vertex $w$.
Similarly, we assume $\{x_{i^\prime},x_{\hat{i}^\prime}\}\in E(\mathbf{G})$.
Since $\{u,v\}\in E(\mathbf{G})$ and $(C_{i},C_{i^{\prime}})$ is exceptional in $\mathbf{G}$, we note that the vertex $u$ can not be $y_{i,2t_{i}-1}$ or $y_{i,2t_{i}}$. Similarly, $v$ is not any of the vertices $y_{i^{\prime},2t_{i^{\prime}}-1}$ or $y_{i^{\prime},2t_{i^{\prime}}}$.
Thus, we conclude
$u\in \{x_{i}\} \cup \{w,x_{\hat{i}}\}$ and $v\in \{x_{i^\prime}\} \cup \{w, x_{\hat{i}^\prime}\}$.
Suppose $u=x_{i}$. Since $\mathbf{G}$ is a triangular cactus graph with $\diam{\mathbf{G}}=4$, we have that $\{x_{i}, x_{\hat{i}^{\prime}}\}\notin E(\mathbf{G})$, and since $(C_{i},C_{i^{\prime}})$ is an exceptional pair, $\{x_{i}, x_{i^{\prime}}\}\notin E(\mathbf{G})$. Therefore, the only possibility for vertex $v$ is that $v=w$. Similarly, if $u=x_{\hat{i}}$, then also $v$ must be $w$.
If $u=w$, then it is clear that $v$ is either $x_{i^\prime}$ or $x_{\hat{i}^\prime}$.

\noindent
Hence if \eqref{eq:cact02} holds, then one of the vertices $u$ or $v$ must be $w$, while the other vertex belongs to the set $V(C_{i})\cup V(C_{i^{\prime}})\cup N_{\mathbf{G}}\big(V(C_{i})\cup V(C_{i^{\prime}})\big)$.
\medskip

($\impliedby$) Let $\{u,v\}=\{v,w\}\in E(\mathbf{G})$ such that $v\in V(C_{i})\cup V(C_{i^{\prime}})\cup N_{\mathbf{G}}\big(V(C_{i})\cup V(C_{i^{\prime}})\big)$.
Since $N_{\mathbf{G}}(V(C_{i}))\cap N_{\mathbf{G}}(V(C_{i^{\prime}}))=\{w\}$, both $\mathbb{E}_{C_i}+\mathbf{e}_{w}$ and $\mathbb{E}_{C_{i^{\prime}}}+\mathbf{e}_{w}$ can be expressed as a linear combination of $\rho(e)$ for some $e\in E(\mathbf{G})$.
Now, without loss of generality, we assume that $v\in V(C_{i})\cup N_{\mathbf{G}}\big(V(C_{i})\big)$. 
Thus we can write $\mathbb{E}_{C_{i}}+\mathbf{e}_{v}$ as a linear combination of $\rho(e)$ for some $e\in E(\mathbf{G})$.
Hence, $\mathbb{E}_{C_{i}}+\mathbb{E}_{C_{i^{\prime}}}+\mathbf{e}_{v}+\mathbf{e}_{w} =  (\mathbb{E}_{C_{i}}+\mathbf{e}_{v})+(\mathbb{E}_{C_{i^{\prime}}}+\mathbf{e}_{w})$, can be expressed as a linear combination of $\rho(e)$ for some $e\in E(\mathbf{G})$.
Therefore, we have $\mathbb{E}_{C_{i}}+\mathbb{E}_{C_{i^{\prime}}}+\mathbf{e}_{v}+\mathbf{e}_{w}\in S_{\mathbf{G}}$.
\end{proof}

\begin{lemma}\label{lemm:cact03}
Let $(C,C^{\prime})$ be an exceptional pair in $\mathbf{G}$. For any edges $\{u,w\}$ and $\{v,w\}$ in $E(\mathbf{G})$ such that both $u$ and $v$ are not in $V(C)\cup V(C^{\prime})\cup N_{\mathbf{G}}\big(V(C)\cup V(C^{\prime})\big)$, we have
$$\mathbb{E}_{C}+\mathbb{E}_{C^{\prime}}+\rho(\{u,w\})+\rho(\{v,w\})\in S_{\mathbf{G}}$$
if and only if $\{u,v\}\in E(\mathbf{G})$.
\end{lemma}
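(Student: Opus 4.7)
The plan is to establish the two directions by analyzing how
\[
A := \mathbb{E}_{C} + \mathbb{E}_{C'} + \rho(\{u,w\}) + \rho(\{v,w\})
\]
can be expressed as a $\mathbb{Z}_{\geq 0}$-linear combination of edge vectors; throughout, I write $C = \{x_p, y_{p,2t-1}, y_{p,2t}\}$ and $C' = \{x_q, y_{q,2t'-1}, y_{q,2t'}\}$.

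For the direction ($\Leftarrow$), assuming $\{u,v\}\in E(\mathbf{G})$, I would simply display the decomposition
\[
A = \rho(\{x_p, w\}) + \rho(\{y_{p,2t-1}, y_{p,2t}\}) + \rho(\{x_q, w\}) + \rho(\{y_{q,2t'-1}, y_{q,2t'}\}) + \rho(\{u,v\}),
\]
which is immediate by direct verification and puts $A$ in $S_{\mathbf{G}}$.

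For the direction ($\Rightarrow$), the strategy is to show that the hypothesis on $u,v$ pins down the decomposition of $A$ almost entirely. First, I would record the support of $A$: it has coefficient $2$ at $w$ and coefficient $1$ at each of $x_p, x_q, u, v, y_{p,2t-1}, y_{p,2t}, y_{q,2t'-1}, y_{q,2t'}$, while every other vertex has coefficient $0$. Crucially, the vertices $x_{\hat{p}}, x_{\hat{q}}$ (in the sense of the paper's notation) and all $y_{p,k}, y_{q,k'}$ outside $V(C)\cup V(C')$ carry coefficient $0$, because the assumption $u,v\notin V(C)\cup V(C')\cup N_{\mathbf{G}}(V(C)\cup V(C'))$ forbids them from coinciding with $u$ or $v$. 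Next, each of $y_{p,2t-1}$ and $y_{p,2t}$ lies on only two edges of $\mathbf{G}$, both inside the triangle $C$; writing the coefficient equations at these vertices together with the one at $x_p$ forces the edge $\{y_{p,2t-1}, y_{p,2t}\}$ to occur with multiplicity exactly $1$ and the other two edges of $C$ not to be used, with the analogous conclusion for $C'$. The remaining coefficient at $x_p$ must then be covered by an edge $\{x_p, z\}$ whose other endpoint lies in the support of $A$, and the only such neighbor of $x_p$ is $z=w$, since both $x_{\hat{p}}$ and the remaining $y_{p,k}$ have coefficient $0$; the same applies to $x_q$. These two edges saturate the coefficient $2$ at $w$, so no other edge through $w$ can appear in the decomposition. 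What remains to decompose is exactly $\mathbf{e}_u + \mathbf{e}_v$, and since any edge covering $u$ or $v$ whose other endpoint carries coefficient $0$ is forbidden, this forces $\{u,v\}\in E(\mathbf{G})$.

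The main obstacle I anticipate is the case-free bookkeeping in the forward direction: one has to rule out every possible edge through $x_p$, $x_q$, or $w$ other than the two edges $\{x_p,w\}$ and $\{x_q,w\}$, and likewise every possible edge through $u$ or $v$ other than $\{u,v\}$ itself. The hypothesis $\diam\mathbf{G} = 4$ combined with the assumption on the location of $u$ and $v$ is precisely what makes this possible, as it guarantees that apart from $w$, no neighbor of $x_p$ or $x_q$ in $\mathbf{G}$ lies in the support of $A$, and similarly that the only neighbor of $u$ or $v$ that can close the decomposition is the other vertex among $\{u,v\}$.
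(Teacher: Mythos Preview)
Your proof is correct and follows essentially the same approach as the paper's: both argue that any decomposition of $A$ into edge vectors is forced to be $(\mathbb{E}_C+\mathbf{e}_w)+(\mathbb{E}_{C'}+\mathbf{e}_w)+(\mathbf{e}_u+\mathbf{e}_v)$, whence $\{u,v\}$ must be an edge. The paper phrases the forcing at the level of which blocks $\mathbb{E}_C+\mathbf{e}_z$ lie in $S_{\mathbf{G}}$, while you work directly with edge multiplicities via vertex-coefficient equations; this makes your forward direction more explicit (and in fact more rigorous) than the paper's somewhat terse ``the only possible combination is\ldots''.
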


\begin{proof}
Since $\{u,w\}$ and $\{v,w\}$ are edges of $\mathbf{G}$, observe that $u=x_{i}$ and $v=x_{j}$ for some $i,j\in [2n]$ (see Figure~\ref{fig:cactgen}). 
Recall that for any exceptional pair $(C,C^{\prime})$ in $\mathbf{G}$, we have
$(\mathbb{E}_{C}+\mathbb{E}_{C{\prime}})\notin S_{\mathbf{G}}$.
Furthermore, since both $u$ and $v$ are not in $V(C)\cup V(C^{\prime})\cup N_{\mathbf{G}}\big(V(C)\cup V(C^{\prime})\big)$, it follows that none of the expressions 
$(\mathbb{E}_{C}+ \mathbf{e}_{u})$, $(\mathbb{E}_{C}+\mathbf{e}_{v})$, $ (\mathbb{E}_{C^{\prime}}+ \mathbf{e}_{u})$, or $ (\mathbb{E}_{C^{\prime}}+ \mathbf{e}_{v})$ can be expressed as a linear combination of $\rho(e)$ for some $e\in E(\mathbf{G})$. 
Thus, for $$\mathbb{E}_{C}+\mathbb{E}_{C^{\prime}}+\rho(\{u,w\})+\rho(\{v,w\})= \mathbb{E}_{C}+\mathbb{E}_{C^{\prime}} + \mathbf{e}_{u}+ \mathbf{e}_{w} + \mathbf{e}_{v} + \mathbf{e}_{w} \in S_{\mathbf{G}},$$ the only possible combination is 
$(\mathbb{E}_{C}+ \mathbf{e}_{w}) + (\mathbb{E}_{C^{\prime}}+\mathbf{e}_{w}) + (\mathbf{e}_{u}+\mathbf{e}_{v}).$ 
For this expression to belong to $S_{\mathbf{G}}$, it is necessary that $(\mathbf{e}_{u}+\mathbf{e}_{v})\in S_{\mathbf{G}}$.  
This is possible if and only if $\{u,v\}\in E(\mathbf{G})$.
\end{proof}

Note that $w$ is always a cutpoint of $\mathbf{G}$.
Moreover, all the non-cutpoints of $\mathbf{G}$ are regular in $\mathbf{G}$.
Based on the fact on whether $w$ in $\mathbf{G}$ is a regular cutpoint or not, we classify the graph $\mathbf{G}$ into two types.
Let us look at these two cases in-depth in the following subsections.


\subsection{\texorpdfstring{Type 1: $w$ is a regular cutpoint of $\mathbf{G}$}{Type 1}}\label{subsec:c1}

Let $\mathbf{G}^{\prime}$ be the triangular cactus graph $\mathbf{G}$ where any $3$-cycle in $\mathbf{G}^{\prime}$ containing the vertex $w$ is such that at least one of its remaining vertices $x_{i}$ will have at least one $3$-cycle of the form $\{x_{i},y_{i,2t-1},y_{i,2t}\}$ attached to it. 
This is the first type of triangular cactus graph $\mathbf{G}$ that we will be studying.

Let us denote the vertices $x_i$ with no $3$-cycles $\{x_{i},y_{i,2t-1},y_{i,2t}\}$ attached to them as $\zeta_i$ and let there be $l$ such vertices $\zeta_i$ in $\mathbf{G}^{\prime}$.
That is, each $\zeta_i\in V(\mathbf{G}^{\prime})$ has no $3$-cycles $\{x_{i},y_{i,2t-1},y_{i,2t}\}$ attached to it for all $1\leq i\leq l$.
Note that, according to the description of $\mathbf{G}^{\prime}$, for any edge $\{x_{i},x_{j}\}\in E(\mathbf{G}^{\prime})$, at most one of the vertices ($x_{i}$ or $x_{j}$) can be a vertex $\zeta_i$.
Therefore, we have $0\leq l\leq n$.
An illustration of the graph $\mathbf{G}^{\prime}$ is shown in Figure~\ref{fig:cactreg}.

\begin{figure}[ht]
\centering
\begin{tikzpicture}
\draw[black, thin] (3,0) -- (5,2) -- (2,3)-- cycle;
\filldraw [black] (3,0) circle (0.8pt);
\filldraw [black] (2,3) circle (0.8pt);
\filldraw [black] (3.5,4.2) circle (0.8pt);
\filldraw [black] (6.5,4.2) circle (0.8pt);
\filldraw [black] (8,3) circle (0.8pt);
\filldraw [black] (7,0) circle (0.8pt);
\filldraw [black] (3,0) node[anchor=west] {$\zeta_i$};
\filldraw [black] (2,3) circle (0.8pt);
\filldraw [black] (2,3.1) node[anchor=west] {$x_j$};
\draw[black, thin] (5,2) -- (3.5,4.2) -- (6.5,4.2)-- cycle;
\filldraw [black] (5,2) circle (1pt);
\filldraw [black] (5,2) node[anchor=north] {$w$};
\path (7,2.5) -- node[auto=false]{$\vdots$} (5,3.2);
\draw[black, thin] (5,2) -- (8,3) -- (7,0)-- cycle;
\draw[black, thin] (2,3) -- (1.2,2.3) -- (1.4,2)-- cycle;
\filldraw [black] (1.2,2.3) circle (0.5pt);
\filldraw [black] (1.2,2.3) node[anchor=east] {{\tiny $y_{j,2}$}};
\filldraw [black] (1.4,2) circle (0.5pt);
\filldraw [black] (1.4,2) node[anchor=north] {{\tiny $y_{j,1}$}};
\draw[black, thin] (2,3) -- (1.2,3.3) -- (1.4,3.7)-- cycle;
\filldraw [black] (1.2,3.3) circle (0.5pt);
\filldraw [black] (1.3,3.3) node[anchor=east] {{\tiny $y_{j,2s_{j}-1}$}};
\filldraw [black] (1.4,3.7) circle (0.5pt);
\filldraw [black] (1.4,3.7) node[anchor=south] {{\tiny $y_{j,2s_{j}}$}};
\path (1,3) -- node[auto=false]{$\vdots$} (2,3);
\draw[black, thin] (3.5,4.2) -- (2.6,4) -- (3,3.52)-- cycle;
\filldraw [black] (2.6,4) circle (0.5pt);
\filldraw [black] (3,3.52) circle (0.5pt);
\draw[black, thin] (3.5,4.2) -- (3.2,4.8) -- (3.8,5)-- cycle;
\filldraw [black] (3.2,4.8) circle (0.5pt);
\filldraw [black] (3.8,5) circle (0.5pt);
\path (3,4.5) -- node[auto=false]{$\vdots$} (3.5,4.5);
\draw[black, thin] (6.5,4.2) -- (6.6,3.4) -- (7,3.52)-- cycle;
\filldraw [black] (6.6,3.4) circle (0.5pt);
\filldraw [black] (7,3.52) circle (0.5pt);
\draw[black, thin] (6.5,4.2) -- (6.8,4.8) -- (7.5,4.8)-- cycle;
\filldraw [black] (6.8,4.8) circle (0.5pt);
\filldraw [black] (7.5,4.8) circle (0.5pt);
\path (6.7,4) -- node[auto=false]{$\vdots$} (7,4.5);
\draw[black, thin] (8,3) -- (7.8,3.4) -- (8.3,3.5)-- cycle;
\filldraw [black] (7.8,3.4) circle (0.5pt);
\filldraw [black] (8.3,3.5) circle (0.5pt);
\draw[black, thin] (8,3) -- (8.1,2.6) -- (8.5,2.4)-- cycle;
\filldraw [black] (8.1,2.6) circle (0.5pt);
\filldraw [black] (8.5,2.4) circle (0.5pt);
\path (8,2.7) -- node[auto=false]{$\vdots$} (8.5,3.7);
\draw[black, thin] (7,0) -- (6,-0.5) -- (6.5,-0.5)-- cycle;
\filldraw [black] (6,-0.5) circle (0.5pt);
\filldraw [black] (6.5,-0.5) circle (0.5pt);
\draw[black, thin] (7,0) -- (7.34,-0.5) -- (7.6,-0.3)-- cycle;
\filldraw [black] (7.34,-0.5) circle (0.5pt);
\filldraw [black] (7.6,-0.3) circle (0.5pt);
\path (6.36,-0.6) -- node[auto=false]{$\dots$} (7.6,0);
\end{tikzpicture}
\caption{An illustration of graph $\mathbf{G}^{\prime}$}
\label{fig:cactreg}
\end{figure}
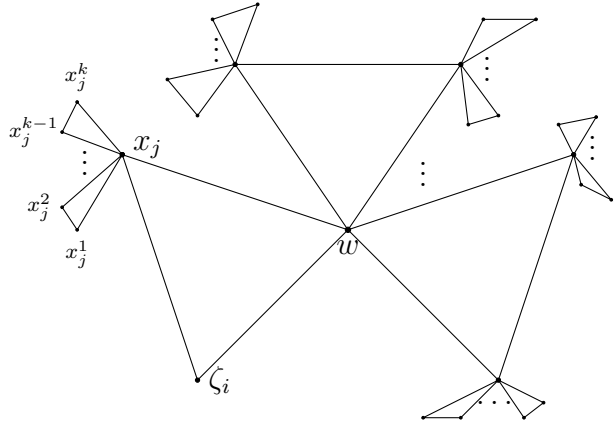

Now, let us look at the regular vertices and fundamental sets in $\mathbf{G}^{\prime}$. 
The vertex $w$ and all the $x_{i}$ of $\mathbf{G}^{\prime}$ with at least one $3$-cycle $\{x_{i},y_{i,2t-1},y_{i,2t}\}$ attached to them, are cutpoints of $\mathbf{G}^{\prime}$. That is, the set of cutpoints in $\mathbf{G}^{\prime}$ is 
$\{w\}\cup \big(\{x_{i}\colon 1\leq i\leq 2n\}\backslash \bigcup_{i=1}^{l}\{\zeta_{i}\}\big) .$
The vertex $w$ and all the non-cutpoints of $\mathbf{G}^{\prime}$ are regular in $\mathbf{G}^{\prime}$. 

Let us now focus on the fundamental sets in $\mathbf{G}^{\prime}$.
For $i\in [2n]$, let $s_i$ number of $3$-cycles, not containing the vertex $w$, be attached to $x_{i}\in V(\mathbf{G}^{\prime})$.
All the fundamental sets of $\mathbf{G}^{\prime}$ containing the vertex $w$ have the form $\{w\}\cup\bigcup_{1\leq i \leq 2n}\{y_{i,k_{1}},y_{i,k_{2}},\dots, y_{i,k_{s_i}}\}$, where $k_{u}\in [2s_i]$ and none of the vertices $y_{i,k_{u}}$ are adjacent to each other. 
Note that, for a fundamental set $T$ with $w\in T$, and any exceptional pair $(C, C^{\prime})$ in $\mathbf{G}^{\prime}$, 
$\big(T\cup N_{\mathbf{G}^{\prime}}(T)\big) \cap \big(V(C)\cup V(C^{\prime})\big) \neq \emptyset$.

For the graph $\mathbf{G}^{\prime}$, and for any $i,j\in [2n]$, the building blocks of any fundamental set not containing $w$ are as follows:
\begin{enumerate}[label=(\roman*)]
    \item \label{1} Suppose $s$ is the number of $3$-cycles that are attached to the vertex $x_{i}$ but do not contain the vertex $w$. Then, the set $\{y_{i,k_{1}}, y_{i,k_{2}}, \dots, y_{i,k_{s}}\}$, where $k_{u}\in [2s]$, and none of the vertices $y_{i,k_{u}}$ are adjacent to each other, is fundamental in $\mathbf{G}^{\prime}$.
    \item \label{2} Let $\{x_{i}, x_{j}\} \in E(\mathbf{G}^{\prime})$, and let $t$ be the number of $3$-cycles attached to the vertex $x_{j}$ that do not contain the vertex $w$. Then, the set $\{x_{i}, y_{j,k_{1}}, y_{j,k_{2}}, \dots, y_{j,k_{t}}\}$, where none of the vertices $y_{j,k_{v}}$ are adjacent to each other for $k_{v}\in [2t]$, forms a fundamental set in $\mathbf{G}^{\prime}$.
\end{enumerate}

Note that any fundamental set $T$ in $\mathbf{G}^{\prime}$ with $w\notin T$ is either one of the listed building blocks or can be expressed as their unions.
Moreover, for any fundamental set $T$ in $\mathbf{G}^{\prime}$ consisting of building blocks listed in \ref{2}, we have $w\in N_{\mathbf{G}^{\prime}}(T)$.

Let us define some notations to ensure clarity and consistency throughout this article. 
Suppose $p$ is a positive integer such that $1\leq p\leq \floor*{\frac{n}{2}}$, where $n$ is the total number of $3$-cycles with common vertex $w$.
Let $\mathcal{E}_{\mathbf{G}^{\prime}}^{p}[(C, C^{\prime})]$ denote the set consisting of $p$ exceptional pairs of $\mathbf{G}^{\prime}$, including the exceptional pair $(C, C^{\prime})$, such that pairing any odd cycles among the exceptional pairs in $\mathcal{E}_{\mathbf{G}^{\prime}}^{p}[(C, C^{\prime})]$ yields another exceptional pair in $\mathbf{G}^{\prime}$. Specifically, we have
\begin{align*}
\mathcal{E}_{\mathbf{G}^{\prime}}^{p}[(C, C^{\prime})]\\ 
& \hspace{-2cm}  =
\left\{(C_1, C_{1^\prime}),\dots, (C_p, C_{p^\prime})\colon 
\begin{aligned}
\phantom{a} & (C_i, C_{i^\prime}) \textrm{ is exceptional in } \mathbf{G}^{\prime},\  \forall \ 1\leq i\leq p,\\
\phantom{b} & (C, C^{\prime}) =(C_{k}, C_{k^\prime}) \textrm{ for some } k\in[p], \textrm{ and }\\
\phantom{c} & (C_i, C_{j}) \textrm{ is also exceptional in } \mathbf{G}^{\prime}  \textrm{ for any } i,j\in [p\cup p^{\prime}].
\end{aligned}
\right\}
\end{align*}
Note that the collection of all exceptional pairs of $\mathbf{G}^{\prime}$ can be denoted as the union of the sets $\mathcal{E}_{\mathbf{G}^{\prime}}^{1}[(C, C^{\prime})]=\{(C, C^{\prime})\}$, taken over all possible exceptional pairs $(C, C^{\prime})$ of $\mathbf{G}^{\prime}$.

For any set $\mathcal{E}_{\mathbf{G}^{\prime}}^{p}[(C, C^{\prime})]$, we define 
$$q_{\mathcal{E}_{p}}[(C, C^{\prime})]\coloneqq \sum\limits_{i=1}^{p}(\mathbb{E}_{C_{i}}+ \mathbb{E}_{C_{i^{\prime}}}),$$
where $(C_i, C_{i^\prime}) \in \mathcal{E}_{\mathbf{G}^{\prime}}^{p}[(C, C^{\prime})]$ for all $1\leq i\leq p$.
Since pairing any odd cycles from the exceptional pairs in $\mathcal{E}_{\mathbf{G}^{\prime}}^{p}[(C, C^{\prime})]$ results in another exceptional pair in $\mathbf{G}^{\prime}$, in accordance with Lemma~\ref{lemm:cact01}, it follows that $q_{\mathcal{E}_{p}}[(C, C^{\prime})]\notin S_{\mathbf{G}^{\prime}}$, for any $\mathcal{E}_{\mathbf{G}^{\prime}}^{p}[(C, C^{\prime})]$.

Further, we define $\mathcal{T}_{\mathcal{E}_{\mathbf{G}^{\prime}}^{p}}[(C, C^{\prime})]$ as the set of all fundamental sets in $\mathbf{G}^{\prime}$ such that we have $w\in N_{\mathbf{G}^{\prime}}(T)$ and $\big(T\cup N_{\mathbf{G}^{\prime}}(T)\big) \cap \big(V(C_i)\cup V(C_{i^{\prime}})\big) =\emptyset$ for any exceptional pair $(C_i, C_{i^{\prime}}) \in \mathcal{E}_{\mathbf{G}^{\prime}}^{p}[(C, C^{\prime})]$. 
We may drop the notation $[(C, C^{\prime})]$ when the context is clear, referring simply to $\mathcal{T}_{\mathcal{E}_{\mathbf{G}^{\prime}}^{p}}$.

\begin{prop}\label{prop:hole1}
Let $(C, C^{\prime})$ be any exceptional pair in $\mathbf{G}^{\prime}$ and let $n$ be the total number of $3$-cycles in $\mathbf{G}^{\prime}$ with common vertex $w$.
For the triangular cactus graph $\mathbf{G}^{\prime}$ (Figure~\ref{fig:cactreg}), the set of holes $\overline{S_{\mathbf{G}^{\prime}}}\backslash S_{\mathbf{G}^{\prime}}$ is as follows:
\begin{equation}\label{eq:hole_cactus01}
\overline{S_{\mathbf{G}^{\prime}}}\backslash S_{\mathbf{G}^{\prime}} = \left( \bigcup\limits_{\substack {\mathcal{E}_{\mathbf{G}^{\prime}}^{p}[(C, C^{\prime})]; \\ \\ T\in\mathcal{T}_{\mathcal{E}_{\mathbf{G}^{\prime}}^{p}}}} (q_{\mathcal{E}_{p}}[(C, C^{\prime})]+\mathbb{Z}F_{T} ) \cap \mathcal{C}_{\mathbf{G}^{\prime}}\right) \cup \left(\bigcup\limits_{\mathcal{E}_{\mathbf{G}^{\prime}}^{p}[(C, C^{\prime})]} (q_{\mathcal{E}_{p}}[(C, C^{\prime})]+\mathbb{Z}F_{w}) \cap \mathcal{C}_{\mathbf{G}^{\prime}}\right), 
\end{equation}
where the union is taken over all possible sets $\mathcal{E}_{\mathbf{G}^{\prime}}^{p}[(C, C^{\prime})]$ for which $q_{\mathcal{E}_{p}}[(C, C^{\prime})]$ is defined in $\mathbf{G}^{\prime}$ and their corresponding fundamental sets $T\in\mathcal{T}_{\mathcal{E}_{\mathbf{G}^{\prime}}^{p}}[(C, C^{\prime})]$, where $1\leq p\leq \floor*{\frac{n}{2}}$.
\end{prop}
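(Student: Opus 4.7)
The plan is to establish the claimed equality by proving both inclusions, parametrizing $\overline{S_{\mathbf{G}^{\prime}}}$ via Theorem~\ref{normalization} as non-negative integer combinations of $\mathbb{E}_{C}+\mathbb{E}_{C^{\prime}}$ over exceptional pairs plus elements of $S_{\mathbf{G}^{\prime}}$, and then using Lemmas~\ref{lemm:cact01}--\ref{lemm:cact03} to control when such combinations fail to land back in $S_{\mathbf{G}^{\prime}}$. Throughout, I would keep in mind that $F_T$ consists of $\rho(e)$ for edges $e=\{a,b\}$ with one endpoint in $T$ and one in $N_{\mathbf{G}^{\prime}}(T)$, or with both endpoints outside $T\cup N_{\mathbf{G}^{\prime}}(T)$, and that $F_w$ consists of $\rho(e)$ for edges not incident to $w$.

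For the inclusion $\supseteq$, I would fix an arbitrary element $h=q_{\mathcal{E}_{p}}[(C,C^{\prime})]+z$ with $z\in\mathbb{Z}F_{T}$ (or $z\in\mathbb{Z}F_{w}$) and $h\in\mathcal{C}_{\mathbf{G}^{\prime}}$. Since every $\mathbb{E}_{C_i}+\mathbb{E}_{C_{i^{\prime}}}$ lies in $\overline{S_{\mathbf{G}^{\prime}}}$ by Theorem~\ref{normalization}, and since $\mathbb{Z}F_{T}\subseteq \mathbb{Z}\mathbb{A}_{\mathbf{G}^{\prime}}$, the identity $\overline{S_{\mathbf{G}^{\prime}}}=\mathcal{C}_{\mathbf{G}^{\prime}}\cap\mathbb{Z}\mathbb{A}_{\mathbf{G}^{\prime}}$ gives $h\in\overline{S_{\mathbf{G}^{\prime}}}$. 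To show $h\notin S_{\mathbf{G}^{\prime}}$, I would argue that any candidate expression $h=\sum a_{e}\rho(e)$ with $a_{e}\in\mathbb{Z}_{\geq 0}$ must eventually, after pairing off edges incident to the exceptional cycles, reduce to certifying $q_{\mathcal{E}_{p}}\in S_{\mathbf{G}^{\prime}}$ (forbidden by Lemma~\ref{lemm:cact01}) or to producing sums of the form $\mathbb{E}_{C_i}+\mathbb{E}_{C_{i^{\prime}}}+\rho(e)+\rho(e^{\prime})$ controlled by Lemmas~\ref{lemm:cact02} and~\ref{lemm:cact03}. The defining conditions $w\in N_{\mathbf{G}^{\prime}}(T)$ and $(T\cup N_{\mathbf{G}^{\prime}}(T))\cap(V(C_i)\cup V(C_{i^{\prime}}))=\emptyset$ ensure that edges in $F_T$ cannot realize the patterns permitted by those lemmas, and analogously for $F_w$ (since $w\notin e$ for $e\in F_w$), yielding the required contradiction.

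For the inclusion $\subseteq$, take $h\in\overline{S_{\mathbf{G}^{\prime}}}\backslash S_{\mathbf{G}^{\prime}}$ and, using Theorem~\ref{normalization}, write $h=s+\sum_{i=1}^{k} b_{i}(\mathbb{E}_{C_i}+\mathbb{E}_{C_{i^{\prime}}})$ with $s\in S_{\mathbf{G}^{\prime}}$ and $b_{i}\in\mathbb{Z}_{\geq 0}$, choosing the representation with $\sum b_{i}$ minimal. Lemma~\ref{lemm:cact01} forces the surviving pairs to be mutually incompatible with re-pairing into non-exceptional configurations; this is precisely the defining condition of some set $\mathcal{E}_{\mathbf{G}^{\prime}}^{p}[(C,C^{\prime})]$, and one can further reduce by Lemma~\ref{lemm:cact01} whenever two pairs could combine, so that $h-q_{\mathcal{E}_{p}}[(C,C^{\prime})]\in S_{\mathbf{G}^{\prime}}$. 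The remaining task is to locate this difference inside $\mathbb{Z}F_{T}$ for some $T\in\mathcal{T}_{\mathcal{E}_{\mathbf{G}^{\prime}}^{p}}$, or inside $\mathbb{Z}F_{w}$. Decomposing $h-q_{\mathcal{E}_{p}}$ into edge vectors and repeatedly invoking Lemmas~\ref{lemm:cact02} and~\ref{lemm:cact03}, each edge either avoids all vertices $V(C_i)\cup V(C_{i^{\prime}})\cup N_{\mathbf{G}^{\prime}}(V(C_i)\cup V(C_{i^{\prime}}))$ (in which case it fits into some $F_T$) or else is forced to be incident to $w$ in a way that permits absorbing it via the relation $2(\mathbb{E}_{C}+\mathbb{E}_{C^{\prime}})\in S_{\mathbf{G}^{\prime}}$ combined with further exceptional contributions; a case distinction on whether $w$ appears in the edge support yields the split between the $F_T$-family and the $F_w$-family.

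The hardest step will be the reverse inclusion, specifically the combinatorial bookkeeping that identifies the correct facet ($F_T$ for a specific fundamental set $T$, or $F_w$) for a given hole. The difficulty is that the same hole can be rewritten in many ways via Theorem~\ref{normalization}, and one must argue that \emph{some} such rewriting places the residual part in a face lattice of the required shape. The diameter-$4$ hypothesis is crucial here, since it guarantees that $w$ is the unique shared neighbor of any two exceptional triangles, so the local geometry around each exceptional pair is tightly constrained; this rigidity is what ultimately lets one match $s-q_{\mathcal{E}_{p}}\mod$(relations) to $\mathbb{Z}F_{T}$ or $\mathbb{Z}F_{w}$. Once both inclusions are in place, every family of holes $q_{\mathcal{E}_{p}}+\mathbb{Z}F_{T}$ or $q_{\mathcal{E}_{p}}+\mathbb{Z}F_{w}$ is $(d-1)$-dimensional since $F_T$ and $F_w$ are facets of $\mathcal{C}_{\mathbf{G}^{\prime}}$, which is exactly the input needed for Theorem~\ref{thm:S2holes} later on.
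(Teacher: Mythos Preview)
Your proposal is correct and follows essentially the same route as the paper: both directions are proved via Theorem~\ref{normalization} and Lemmas~\ref{lemm:cact01}--\ref{lemm:cact03}, with the reverse inclusion organized by whether the vertex $w$ is involved. The paper's version streamlines your ``edge-by-edge bookkeeping'' for $(\subseteq)$ by splitting directly on the $w$-coordinate $\alpha_w$ (if $\alpha_w=0$ one lands in the $F_w$-family, if $\alpha_w>0$ in some $F_T$-family), which is exactly your final case distinction stated coordinate-wise rather than in terms of edge supports.
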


\begin{proof}
We know that 
$\overline{S_{\mathbf{G}^{\prime}}}=S_{\mathbf{G}^{\prime}} + \mathbb{Z}_{\geq 0}\{\mathbb{E}_{C}+ \mathbb{E}_{C^{\prime}}\colon (C,C^{\prime}) \text{ is exceptional in } \mathbf{G}^{\prime}\}$.
Note that $q_{\mathcal{E}_{p}}[(C, C^{\prime})]$ can be written as some element in $\mathbb{Z}_{\geq 0}\{\mathbb{E}_{C}+ \mathbb{E}_{C^{\prime}}\colon (C,C^{\prime}) \text{ is exceptional in } \mathbf{G}^{\prime}\}$ that can not be represented as a linear combination of $\rho(e)$ for some $e\in E(\mathbf{G}^{\prime})$.

($\supset$) Let $\alpha\in \left(q_{\mathcal{E}_{p}}[(C, C^{\prime})]+\mathbb{Z}F_{w} \right) \cap \mathcal{C}_{\mathbf{G}^{\prime}}$ for some $q_{\mathcal{E}_{p}}[(C, C^{\prime})]$. 
Let $\alpha_{i}$, $(q_{\mathcal{E}_{p}})_{i}$ and $(\mathbb{Z}F_{w})_{i}$ represent the $i$-th coordinates of $\alpha$, $q_{\mathcal{E}_{p}}[(C, C^{\prime})]$ and $\mathbb{Z}F_{w}$ respectively. 
We know that $q_{\mathcal{E}_{p}}[(C, C^{\prime})]\notin S_{\mathbf{G}^{\prime}}$.
Since $w\notin V(C)\cup V(C^{\prime})$ for any exceptional pair $(C, C^{\prime})$ in $\mathbf{G}^{\prime}$, it follows that $(q_{\mathcal{E}_{p}})_{w}=0$.
Additionally, $(\mathbb{Z}F_{w})_{w}=0$.
This implies that both $q_{\mathcal{E}_{p}}[(C, C^{\prime})]$ and $\mathbb{Z}F_{w}$ have no contributions from any edges adjacent to the vertex $w$. 
Therefore, we conclude that $\alpha_{w}=0$, indicating that $\alpha$ has no contribution from edges of the form $\{u,w\}\in E(\mathbf{G}^{\prime})$.
Thus, by Lemma~\ref{lemm:cact02}, it is clear that $\alpha\notin S_{\mathbf{G}^{\prime}}$. Consequently, we have $\alpha\in \overline{S_{\mathbf{G}^{\prime}}}\backslash S_{\mathbf{G}^{\prime}}$.

\noindent
Now, consider $\alpha^\prime\in \left(q_{\mathcal{E}_{p}}[(C, C^{\prime})]+\mathbb{Z}F_{T} \right) \cap \mathcal{C}_{\mathbf{G}^{\prime}}$ for some $q_{\mathcal{E}_{p}}[(C, C^{\prime})]$ and $T\in\mathcal{T}_{\mathcal{E}_{\mathbf{G}^{\prime}}^{p}}[(C, C^{\prime})]$.
By definition, for any $T\in\mathcal{T}_{\mathcal{E}_{\mathbf{G}^{\prime}}^{p}}[(C, C^{\prime})]$, the condition $\big(T\cup N_{\mathbf{G}^{\prime}}(T)\big) \cap \big(V(C_i)\cup V(C_{i^{\prime}})\big) =\emptyset$ holds for any exceptional pairs $(C_i, C_{i^{\prime}}) \in \mathcal{E}_{\mathbf{G}^{\prime}}^{p}[(C, C^{\prime})]$ for all $1\leq p\leq \floor*{\frac{n}{2}}$ and exceptional pairs $(C, C^{\prime})$ in $\mathbf{G^\prime}$.
Further, since $w\in N_{\mathbf{G^\prime}}(T)$, any $T\in\mathcal{T}_{\mathcal{E}_{\mathbf{G}^{\prime}}^{p}}[(C, C^{\prime})]$ is constructed from building blocks of the form specified in \ref{2}. 
In this construction, there will not be any edges of the form $\{x_{i}, x_{j}\}\in E(\mathbf{G}^{\prime})$ such that both edges $\{x_{i}, w\}$ and $\{x_{j},w\}$ are included in $T$.
Thus, by Lemma~\ref{lemm:cact03}, any combination of edges from the facet $F_{T}$ combined with $q_{\mathcal{E}_{p}}[(C, C^{\prime})]$ will never belong to $S_{\mathbf{G}^{\prime}}$, for $T\in\mathcal{T}_{\mathcal{E}_{\mathbf{G}^{\prime}}^{p}}[(C, C^{\prime})]$, for any  $1\leq p\leq \floor*{\frac{n}{2}}$ and exceptional pairs $(C, C^{\prime})$ in $\mathbf{G^\prime}$.
Therefore, we conclude that $\alpha^\prime \in \overline{S_{\mathbf{G}^{\prime}}}\backslash S_{\mathbf{G}^{\prime}}$.

($\subset$) Now let us prove that 
\begin{equation*}
\overline{S_{\mathbf{G}^{\prime}}}\backslash S_{\mathbf{G}^{\prime}} \subset \left(\bigcup\limits_{\substack {\mathcal{E}_{\mathbf{G}^{\prime}}^{p}[(C, C^{\prime})]; \\ \\ T\in\mathcal{T}_{\mathcal{E}_{\mathbf{G}^{\prime}}^{p}}}} (q_{\mathcal{E}_{p}}[(C, C^{\prime})]+\mathbb{Z}F_{T} ) \cap \mathcal{C}_{\mathbf{G}^{\prime}}\right)\cup \left( \bigcup\limits_{\mathcal{E}_{\mathbf{G}^{\prime}}^{p}[(C, C^{\prime})]} (q_{\mathcal{E}_{p}}[(C, C^{\prime}])+\mathbb{Z}F_{w}) \cap \mathcal{C}_{\mathbf{G}^{\prime}}\right).
\end{equation*}

\noindent
Let us consider $\alpha\in\overline{S_{\mathbf{G}^{\prime}}}\backslash S_{\mathbf{G}^{\prime}}$. 
Then we can express $\alpha$ as $\alpha= \beta +\gamma$, where $\beta\in S_{\mathbf{G}^{\prime}}$ and $\gamma\in \mathbb{Z}_{\geq 0}\{\mathbb{E}_{C}+ \mathbb{E}_{C^{\prime}}\colon (C,C^{\prime}) \text{ are exceptional in } \mathbf{G}^{\prime}\}$ such that $\alpha$ can not be expressed as a linear combination of $\rho(e)$ for some $e\in E(\mathbf{G}^{\prime})$.
If $\gamma = 0$, then $\alpha\in S_{\mathbf{G}^{\prime}} $. So, let us consider the case where $\gamma \neq 0$.
Let $\alpha_{i}$, $\beta_{i}$ and $\gamma_{i}$ represent the $i$-th coordinates of $\alpha$, $\beta$ and $\gamma$ respectively. 
\medskip

\noindent
$\textbf{\underline{Case 1:}}$  Let $\alpha_{w}=0$.
Given that $\alpha_{w}=0$ and $\gamma \neq 0$ with $\gamma_{w}=0$, it follows that if $\beta\neq 0$, then $\beta_{w}= 0$.
This indicates that the formation of $\beta$ does not include any edges adjacent to the common vertex $w$, implying that $\beta$ is contained within the facet $F_{w}$. 
Furthermore, from the structure of $\mathbf{G}^{\prime}$, we can see that for any arbitrary exceptional pairs $(C,C^{\prime})$ and $(\overline{C},\overline{C^{\prime}})$, if we have $\mathbb{E}_{C}+\mathbb{E}_{C^{\prime}}+\mathbb{E}_{\overline{C}}+\mathbb{E}_{\overline{C^{\prime}}}\in S_{\mathbf{G}^{\prime}}$ (as established in Lemma~\ref{lemm:cact01}),  then this expression is always contained within the facet $F_w$. 
Therefore, these structures can be included in $\beta$.
We know that $q_{\mathcal{E}_{p}}[(C, C^{\prime})] = \sum_{i=1}^{p}(\mathbb{E}_{C_{i}}+ \mathbb{E}_{C_{i^{\prime}}})\notin S_{\mathbf{G}^{\prime}}$ for any $\mathcal{E}_{\mathbf{G}^{\prime}}^{p}[(C, C^{\prime})]$.
Additionally, we have seen that $q_{\mathcal{E}_{p}}[(C, C^{\prime})]+\mathbb{Z}F_{w}$ cannot be expressed as any linear combination of $\rho(e)$ for $e\in E(\mathbf{G}^{\prime})$, and the $w$-th coordinate of $q_{\mathcal{E}_{p}}[(C, C^{\prime})]+\mathbb{Z}F_{w}$ will be $0$ for any $(C, C^{\prime})$. Therefore, we can express $\alpha= \left(q_{\mathcal{E}_{p}}[(C, C^{\prime})]+\mathbb{Z}F_{w} \right) \cap \mathcal{C}_{\mathbf{G}^{\prime}}$, for some $q_{\mathcal{E}_{p}}[(C, C^{\prime})]$, where all $\beta$ contained in the facet $F_{w}$ will be considered in $\mathbb{Z}F_{w}$.
\medskip


\noindent
$\textbf{\underline{Case 2:}}$  Let $\alpha_{w}> 0$. This implies $\beta\neq 0$ and $\beta_{w}>0$, since $\gamma_{w}=0$ (as $w$ is not contained in the vertex set of any odd cycles appearing in any exceptional pairs).
Consequently, among the edges defining the vector $\beta$, there must be at least one edge adjacent to $w$, say $\{v, w\}$.
Examining the structure of the fundamental sets, the presence of the edge $\{v, w\}$ in the formation of $\beta$ indicates that $\beta$ belongs to some facets corresponding to the fundamental sets with building blocks of type \ref{2}.
Additionally, by generalizing Lemma~\ref{lemm:cact03}, we observe that 
$\sum_{i=1}^{p}(\mathbb{E}_{C_{i}}+ \mathbb{E}_{C_{i^{\prime}}}) + \sum_{i=1}^{p}(\mathbf{e}_{v_i}+\mathbf{e}_{w})$ does not belong to $ S_{\mathbf{G}^{\prime}}$, when $(C_{i}, C_{i^{\prime}})\in \mathcal{E}_{\mathbf{G}^{\prime}}^{p}[(C, C^{\prime})]$ for some $p$ and any exceptional pair $(C, C^{\prime})$.
This holds under the condition that $v_{j}\notin V(C_{i})\cup V(C_{i}^{\prime})\cup N_{\mathbf{G}^{\prime}}\big(V(C_{i})\cup V(C_{i}^{\prime})\big)$ for all $i,j\in [p]$.
Thus, we conclude that the edges $\{v,w\}$ adjacent to $w$ in defining the vector $\beta$ are contained in some facet $F_{T}$ corresponding to the fundamental set $T$ of $\mathbf{G}^{\prime}$ such that $v\in T$, $w\in N_{\mathbf{G}^{\prime}}(T)$,  and $\big(\{v\}\cup N_{\mathbf{G}^{\prime}}(\{v\})\big) \cap \big(V(C_{i})\cup V(C_{i}^{\prime})\big) =\emptyset$.
Therefore, we conclude that any $\alpha\in\overline{S_{\mathbf{G}^{\prime}}}\backslash S_{\mathbf{G}^{\prime}}$ with $\alpha_{w}> 0$ can be expressed as 
$\left(q_{\mathcal{E}_{p}}[(C, C^{\prime})]+\mathbb{Z}F_{T}\right) \cap \mathcal{C}_{\mathbf{G}^{\prime}}$ for some $q_{\mathcal{E}_{p}}[(C, C^{\prime})]$ and its corresponding fundamental set $T\in\mathcal{T}_{\mathcal{E}_{\mathbf{G}^{\prime}}^{p}}[(C, C^{\prime})]$.
\medskip

\noindent
In both cases, we get the desired containment.
\end{proof}


\subsection{\texorpdfstring{Type 2: $w$ is not regular in $\mathbf{G}$}{Type 2}}\label{subsec:c2}
Let $\widetilde{\mathbf{G}}$ be our second type of triangular cactus graph $\mathbf{G}$ such that there exists at least one $3$-cycle in $\widetilde{\mathbf{G}}$ containing $w$, with the remaining vertices of this cycle having no other $3$-cycles attached to them.
Moreover, 
$\diam{\widetilde{\mathbf{G}}}=4$.

As earlier, we denote the vertices $x_i$ with no $3$-cycles $\{x_{i},y_{i,2t-1},y_{i,2t}\}$ attached to them as $\zeta_i$ and let there be $l$ such $\zeta_i$ in $\widetilde{\mathbf{G}}$.
Note that, according to the description of $\widetilde{\mathbf{G}}$, there exists at least a pair of vertices $(\zeta_{i},\zeta_{j})$ in $\widetilde{\mathbf{G}}$ such that $\{\zeta_{i},\zeta_{j}\}\in E(\widetilde{\mathbf{G}})$.  
Since $\diam{\widetilde{\mathbf{G}}}=4$, the number of vertices $\zeta_{i}\in V(\widetilde{\mathbf{G}})$ is at most $2(n-1)$. Therefore we have, $2\leq l\leq 2(n-1)$.
A generic diagram of $\widetilde{\mathbf{G}}$ is shown in Figure~\ref{fig:cactgen2}.

\begin{figure}[ht]
\centering
\begin{tikzpicture}
\draw[black, thin] (3,0) -- (5,2) -- (2,3)-- cycle;
\filldraw [black] (3,0) circle (0.8pt);
\filldraw [black] (2,3) circle (0.8pt);
\filldraw [black] (3.5,4.2) circle (0.8pt);
\filldraw [black] (6.5,4.2) circle (0.8pt);
\filldraw [black] (8,3) circle (0.8pt);
\filldraw [black] (7,0) circle (0.8pt);
\filldraw [black] (3,0) node[anchor=west] {$\zeta_i$};
\filldraw [black] (2,3) circle (0.8pt);
\filldraw [black] (2,3.2) node[anchor=west] {$\zeta_j$};
\draw[black, thin] (5,2) -- (3.5,4.2) -- (6.5,4.2)-- cycle;
\filldraw [black] (5,2) circle (1pt);
\filldraw [black] (5,2) node[anchor=north] {$w$};
\path (7,2.5) -- node[auto=false]{$\vdots$} (5,3.2);
\draw[black, thin] (5,2) -- (8,3) -- (7,0)-- cycle;
\draw[black, thin] (3.5,4.2) -- (2.6,4) -- (3,3.52)-- cycle;
\filldraw [black] (2.6,4) circle (0.5pt);
\filldraw [black] (3,3.52) circle (0.5pt);
\draw[black, thin] (3.5,4.2) -- (3.2,4.8) -- (3.8,5)-- cycle;
\filldraw [black] (3.2,4.8) circle (0.5pt);
\filldraw [black] (3.8,5) circle (0.5pt);
\path (3,4.5) -- node[auto=false]{$\vdots$} (3.5,4.5);
\draw[black, thin] (6.5,4.2) -- (6.6,3.4) -- (7,3.52)-- cycle;
\filldraw [black] (6.6,3.4) circle (0.5pt);
\filldraw [black] (7,3.52) circle (0.5pt);
\draw[black, thin] (6.5,4.2) -- (6.8,4.8) -- (7.5,4.8)-- cycle;
\filldraw [black] (6.8,4.8) circle (0.5pt);
\filldraw [black] (7.5,4.8) circle (0.5pt);
\path (6.7,4) -- node[auto=false]{$\vdots$} (7,4.5);
\draw[black, thin] (8,3) -- (7.8,3.4) -- (8.3,3.5)-- cycle;
\filldraw [black] (7.8,3.4) circle (0.5pt);
\filldraw [black] (8.3,3.5) circle (0.5pt);
\draw[black, thin] (8,3) -- (8.1,2.6) -- (8.5,2.4)-- cycle;
\filldraw [black] (8.1,2.6) circle (0.5pt);
\filldraw [black] (8.5,2.4) circle (0.5pt);
\path (8,2.7) -- node[auto=false]{$\vdots$} (8.5,3.7);
\draw[black, thin] (7,0) -- (6,-0.5) -- (6.5,-0.5)-- cycle;
\filldraw [black] (6,-0.5) circle (0.5pt);
\filldraw [black] (6.5,-0.5) circle (0.5pt);
\draw[black, thin] (7,0) -- (7.34,-0.5) -- (7.6,-0.3)-- cycle;
\filldraw [black] (7.34,-0.5) circle (0.5pt);
\filldraw [black] (7.6,-0.3) circle (0.5pt);
\path (6.36,-0.6) -- node[auto=false]{$\dots$} (7.6,0);
\end{tikzpicture}
\caption{An illustration of graph $\widetilde{\mathbf{G}}$}
\label{fig:cactgen2}
\end{figure}

The set of cutpoints of $\widetilde{\mathbf{G}}$ is 
$\{w\}\cup \big(\{x_{i}\colon 1\leq i\leq 2n\}\backslash \bigcup_{i=1}^{l}\{\zeta_{i}\}\big) .$
Since there exists at least one $\{\zeta_{i},\zeta_{j}\}\in E(\widetilde{\mathbf{G}})$, we observe that $w$ is not regular in $\widetilde{\mathbf{G}}$. 
Note that, the only regular vertices in $\widetilde{\mathbf{G}}$ are the non-cutpoints of $\widetilde{\mathbf{G}}$. 

For $i\in [2n]$, let $s_i$ be the number of $3$-cycles, not containing the vertex $w$, attached to $x_{i}\in V(\widetilde{\mathbf{G}})$.
All the fundamental sets of $\widetilde{\mathbf{G}}$ containing the vertex $w$ have the form $\{w\}\cup\bigcup_{1\leq i \leq 2n}\{y_{i,k_{1}},y_{i,k_{2}},\dots, y_{i,k_{s_i}}\}$, where $k_{u}\in [2s_i]$ and none of the vertices $y_{i,k_{u}}$ are adjacent to each other.

Let us suppose that there are $m$ pairs of vertices $(\zeta_{p},\zeta_{p^{\prime}})$ in $\widetilde{\mathbf{G}}$ such that $\{\zeta_{p},\zeta_{p^{\prime}}\}\in E(\widetilde{\mathbf{G}})$.
For such a pair of vertices $(\zeta_{p},\zeta_{p^{\prime}})$, we define $\omega_{p}$ as $\omega_{p}\in \{\zeta_{p},\zeta_{p^{\prime}}\}$,  $1\leq p\leq m$.
In $\widetilde{\mathbf{G}}$, and for any $i,j\in [2n]$, the building blocks of any fundamental set not containing $w$, are as follows:
\begin{enumerate}[label=(\alph*)]
    \item \label{1a}The set $\bigcup_{p=1}^{m} \{\omega_{p}\}$ is fundamental in $\widetilde{\mathbf{G}}$.
    \item \label{2a} Suppose $s$ is the number of $3$-cycles that are attached to the vertex $x_{i}$, but do not contain the vertex $w$. Then the set $\{y_{i,k_{1}},y_{i,k_{2}},\dots, y_{i,k_{s}}\}$, where $k_{u}\in[2s]$ such that none of the vertices $y_{i,k_{u}}$ are adjacent to each other, forms a fundamental set in $\widetilde{\mathbf{G}}$.
    \item\label{3a} Let $\{x_{i},x_{j}\}\in E(\widetilde{\mathbf{G}})$ and $t$ be the number of $3$-cycles not containing $w$ that are attached to $x_{j}$. Then, $\bigcup_{p=1}^{m} \{\omega_{p}\} \cup \{x_{i}, y_{j,k_{1}},y_{j,k_{2}},\dots , y_{j,k_{t}}\}$ with none of the vertices $y_{j,k_{v}}$, $k_{v}\in[2t]$ adjacent to each other, is a fundamental set in $\widetilde{\mathbf{G}}$.
\end{enumerate}

Any fundamental set $T$ in $\widetilde{\mathbf{G}}$ with $w\notin T$ is either one of the listed building blocks or can be expressed as their unions.
For any fundamental set $T$ consisting of the building blocks in \ref{1a} or \ref{3a}, we observe that $w\in N_{\widetilde{\mathbf{G}}}(T)$.

Let us define some important notations similar to Section~\ref{subsec:c1}.
Suppose $p$ is a positive integer such that $1\leq p\leq \floor*{\frac{n}{2}}$, where $n$ is the total number of $3$-cycles with common vertex $w$.
We define
\begin{align*}
\mathcal{E}_{\widetilde{\mathbf{G}}}^{p}[(C, C^{\prime})]\\ 
& \hspace{-2cm}  \coloneqq 
\left\{(C_1, C_{1^\prime}),\dots, (C_p, C_{p^\prime})\colon 
\begin{aligned}
\phantom{a} & (C_i, C_{i^\prime}) \textrm{ is exceptional in } \widetilde{\mathbf{G}},\  \forall \ 1\leq i\leq p,\\
\phantom{b} & (C, C^{\prime}) =(C_{k}, C_{k^\prime}) \textrm{ for some } k\in[p], \textrm{ and }\\
\phantom{c} & (C_i, C_{j}) \textrm{ is also exceptional in } \widetilde{\mathbf{G}}  \textrm{ for any } i,j\in [p\cup p^{\prime}].
\end{aligned}
\right\}
\end{align*}
Further, for any set $\mathcal{E}_{\widetilde{\mathbf{G}}}^{p}[(C, C^{\prime})]$, we define 
$$\widetilde{q}_{\mathcal{E}_{p}}[(C, C^{\prime})]\coloneqq \sum\limits_{i=1}^{p}(\mathbb{E}_{C_{i}}+ \mathbb{E}_{C_{i^{\prime}}}),$$
where $(C_i, C_{i^\prime}) \in \mathcal{E}_{\widetilde{\mathbf{G}}}^{p}[(C, C^{\prime})]$, for all $1\leq i\leq p$.
Similar to that in Section~\ref{subsec:c1}, we know that $\widetilde{q}_{\mathcal{E}_{p}}[(C, C^{\prime})]\notin S_{\widetilde{\mathbf{G}}}$, for any $\mathcal{E}_{\widetilde{\mathbf{G}}}^{p}[(C, C^{\prime})]$.
In addition, we define $\mathcal{T}_{\mathcal{E}_{\widetilde{\mathbf{G}}}^{p}}[(C, C^{\prime})]$ as the set of all fundamental sets in $\widetilde{\mathbf{G}}$ such that we have $w\in N_{\widetilde{\mathbf{G}}}(T)$ and $\big(T\cup N_{\widetilde{\mathbf{G}}}(T)\big) \cap \big(V(C_i)\cup V(C_{i^{\prime}})\big) =\emptyset$ for any exceptional pair $(C_i, C_{i^{\prime}}) \in \mathcal{E}_{\widetilde{\mathbf{G}}}^{p}[(C, C^{\prime})]$. 
Note that, we may drop the notation $[(C, C^{\prime})]$ when the context is clear, referring simply to $\mathcal{T}_{\mathcal{E}_{\widetilde{\mathbf{G}}}^{p}}$.

\begin{prop}\label{prop:hole2}
Let $(C, C^{\prime})$ be any exceptional pair in $\widetilde{\mathbf{G}}$ and let $n$ be the total number of $3$-cycles in $\widetilde{\mathbf{G}}$ sharing the common vertex $w$.
For the triangular cactus graph $\widetilde{\mathbf{G}}$ (Figure~\ref{fig:cactgen2}), the set of holes $\overline{S_{\widetilde{\mathbf{G}}}}\backslash S_{\widetilde{\mathbf{G}}}$ is as follows:
\begin{equation}\label{eq:hole_cactus02}
\overline{S_{\widetilde{\mathbf{G}}}}\backslash S_{\widetilde{\mathbf{G}}} = \bigcup\limits_{\substack {\mathcal{E}_{\widetilde{\mathbf{G}}}^{p}[(C, C^{\prime})]; \\ \\ T\in\mathcal{T}_{\mathcal{E}_{\widetilde{\mathbf{G}}}^{p}}}} (\widetilde{q}_{\mathcal{E}_{p}}[(C, C^{\prime})]+\mathbb{Z}F_{T} ) \cap \mathcal{C}_{\widetilde{\mathbf{G}}}, 
\end{equation}
where the union is taken over all possible sets $\mathcal{E}_{\widetilde{\mathbf{G}}}^{p}[(C, C^{\prime})]$ for which $\widetilde{q}_{\mathcal{E}_{p}}[(C, C^{\prime})]$ is defined in $\widetilde{\mathbf{G}}$ and their corresponding fundamental sets $T\in\mathcal{T}_{\mathcal{E}_{\widetilde{\mathbf{G}}}^{p}}[(C, C^{\prime})]$, where $1\leq p\leq \floor*{\frac{n}{2}}$.
\end{prop}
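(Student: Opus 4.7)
The plan is to mirror the proof of Proposition~\ref{prop:hole1}, with the structural adjustment that in $\widetilde{\mathbf{G}}$ the vertex $w$ is no longer regular, so the supporting hyperplane $\mathcal{H}_w$ does not exist. The role played by $F_w$ in Type~1 will instead be absorbed by the facets $F_T$ with $T$ built from the new building block~\ref{1a}, namely $T = \bigcup_{p=1}^{m}\{\omega_p\}$, since such $T$ always satisfies $w\in N_{\widetilde{\mathbf{G}}}(T)$. Throughout, one keeps in mind that $w\notin V(C)\cup V(C^{\prime})$ for every exceptional pair, and that the odd cycles $C_i$ appearing in exceptional pairs always have an $x$-vertex with at least one $y$-triangle attached, so $V(C_i)$ is disjoint from every $\zeta$-vertex.

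For the inclusion $(\supset)$: take $\alpha\in(\widetilde{q}_{\mathcal{E}_p}[(C,C^{\prime})]+\mathbb{Z}F_T)\cap\mathcal{C}_{\widetilde{\mathbf{G}}}$ with $T\in\mathcal{T}_{\mathcal{E}_{\widetilde{\mathbf{G}}}^{p}}$. Because $w\in N_{\widetilde{\mathbf{G}}}(T)$, $T$ is constructed only from blocks~\ref{1a} and~\ref{3a}, and in particular it contains no pair $x_i,x_j$ such that $\{x_i,w\},\{x_j,w\}$ and $\{x_i,x_j\}$ are all edges of $\widetilde{\mathbf{G}}$. Combining this with the defining disjointness $(T\cup N_{\widetilde{\mathbf{G}}}(T))\cap(V(C_i)\cup V(C_{i^{\prime}}))=\emptyset$, the proof of the analogous inclusion in Proposition~\ref{prop:hole1} (an appropriate iteration of Lemma~\ref{lemm:cact03}) goes through verbatim: no nonzero nonnegative combination of edges $\rho(e)$ drawn from $F_T$, when added to $\widetilde{q}_{\mathcal{E}_p}[(C,C^{\prime})]$, can land in $S_{\widetilde{\mathbf{G}}}$. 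Hence $\alpha\in\overline{S_{\widetilde{\mathbf{G}}}}\setminus S_{\widetilde{\mathbf{G}}}$.

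For the inclusion $(\subset)$: write $\alpha=\beta+\gamma$ with $\beta\in S_{\widetilde{\mathbf{G}}}$ and $\gamma\in\mathbb{Z}_{\geq 0}\{\mathbb{E}_C+\mathbb{E}_{C^{\prime}}:(C,C^{\prime})\text{ exceptional}\}$, discarding the trivial $\gamma=0$. Writing $\gamma=\widetilde{q}_{\mathcal{E}_p}[(C,C^{\prime})]$ for a maximal such set $\mathcal{E}_{\widetilde{\mathbf{G}}}^{p}[(C,C^{\prime})]$, split into two cases based on $\alpha_w$. If $\alpha_w>0$, then since $\gamma_w=0$ one has $\beta_w>0$, so some edge $\{v,w\}$ appears in $\beta$; the argument of Case~2 of Proposition~\ref{prop:hole1} applies, showing that such edges force $\beta$ to sit in a facet $F_T$ with $v\in T$, $w\in N_{\widetilde{\mathbf{G}}}(T)$, built from the block~\ref{3a}, and $T\in\mathcal{T}_{\mathcal{E}_{\widetilde{\mathbf{G}}}^{p}}$.

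The interesting case is $\alpha_w=0$, which replaces the $F_w$-family of Type~1. Here $\beta_w=0$, so $\beta$ is supported on edges disjoint from $w$: edges of the form $\{x_i,y_{i,k}\}$, $\{y_{i,2t-1},y_{i,2t}\}$, or the bridges $\{\zeta_p,\zeta_{p^{\prime}}\}$. Pick any fundamental set $T=\bigcup_{p=1}^{m}\{\omega_p\}$ of type~\ref{1a}; its hyperplane equation reads
\[
\sum_{p=1}^{m}x_{\omega_p}\;=\;x_w+\sum_{p=1}^{m}x_{\text{partner}(\omega_p)}.
\]
A direct check shows every admissible edge above gives equal contributions to both sides (the bridges contribute $1$ to each side, the other edges contribute $0$ to each), so $\beta\in\mathcal{H}_T\cap\mathcal{C}_{\widetilde{\mathbf{G}}}=F_T$. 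Moreover $T\cup N_{\widetilde{\mathbf{G}}}(T)=\{w\}\cup\bigcup_p\{\zeta_p,\zeta_{p^{\prime}}\}$, which is disjoint from $V(C_i)\cup V(C_{i^{\prime}})$ because every exceptional cycle has its $x$-vertex carrying a $y$-triangle; hence $T\in\mathcal{T}_{\mathcal{E}_{\widetilde{\mathbf{G}}}^{p}}$ and $\alpha=\widetilde{q}_{\mathcal{E}_p}+\beta\in(\widetilde{q}_{\mathcal{E}_p}+\mathbb{Z}F_T)\cap\mathcal{C}_{\widetilde{\mathbf{G}}}$.

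The main obstacle will be this second case: verifying rigorously that a type~\ref{1a} fundamental set exists and that its hyperplane is automatically saturated by every $\beta\in S_{\widetilde{\mathbf{G}}}$ with $\beta_w=0$. This is precisely the structural phenomenon that allows the $(S_2)$-condition to survive the loss of the regular-cutpoint $w$: the bridges $\{\zeta_p,\zeta_{p^{\prime}}\}$ provide the missing facet and ensure the decomposition of holes remains $(d-1)$-dimensional.
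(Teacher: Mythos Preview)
Your proposal is correct and follows essentially the same strategy as the paper: reduce to the argument of Proposition~\ref{prop:hole1}, replacing the role of the facet $F_w$ (absent here since $w$ is not regular) by facets $F_T$ coming from fundamental sets with building blocks of type~\ref{1a} or~\ref{3a}. In fact, your treatment of the case $\alpha_w=0$ is more explicit than the paper's: the paper asserts that ``for any $\beta\in S_{\widetilde{\mathbf{G}}}$ we have $\beta_w>0$'' and thereby collapses everything into Case~2, whereas you correctly observe that $\beta_w=0$ can occur and verify directly that any such $\beta$ lies on the supporting hyperplane $\mathcal{H}_T$ for the type~\ref{1a} fundamental set $T=\bigcup_{p=1}^m\{\omega_p\}$, which then belongs to $\mathcal{T}_{\mathcal{E}_{\widetilde{\mathbf{G}}}^{p}}$.
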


\begin{proof}
For our graph $\widetilde{\mathbf{G}}$, we do not have any facet $F_w$ since the vertex $w$ is not regular. Therefore note that, for any $\beta\in S_{\widetilde{\mathbf{G}}}$, we have $\beta_{w}>0$.
Here, by observing the structure of the fundamental sets in $\widetilde{\mathbf{G}}$, the existence of edge $\{v, w\}$ in the formation of $\beta$ implies that $\beta$ belongs to some facets corresponding to the fundamental sets with building blocks of type \ref{1a} or \ref{3a}.
Further, we can proceed with a similar proof as that of Proposition~\ref{prop:hole1} and prove \eqref{eq:hole_cactus02}. 
\end{proof}

We combine all the above results and proceed to the proof of our main theorem.

\begin{proof}[\textbf{Proof of Theorem~\ref{thm:cactusS2}}]
Since $\diam{ \mathbf{G}} = 4$, there exists at least one pair of odd cycles of the form $(\{x_p, y_{p,2t-1},y_{p,2t}\},\ \{x_q, y_{q,2t^{\prime}-1},y_{q,2t^{\prime}}\})$ in $\mathbf{G}$ such that the distance between $x_{p}$ and $x_{q}$ is exactly $2$. 
All such pairs of odd cycles are exceptional in $\mathbf{G}$. Therefore, the graph $\mathbf{G}$ does not satisfy the odd cycle condition and by Theorem~\ref{thm:normal}, we conclude that the edge ring $\mathbb{K}[\mathbf{G}]$ is non-normal, irrespective of the two types of $\mathbf{G}$.

In Sections~\ref{subsec:c1} and \ref{subsec:c2}, we explored the two types of graph $\mathbf{G}$. 
Let us assume that both $\mathbf{G}^{\prime}$ and $\widetilde{\mathbf{G}}$ are triangular cactus graphs on $d$ vertices.
We denoted $F_T$ and $F_v$ as facets corresponding to the hyperplanes $\mathcal{H}_{T}$ and $\mathcal{H}_{v}$, defined by regular vertices $v$ and fundamental sets $T$, respectively.
Since the cone $\mathcal{C}_{\mathbf{G}}$ resides in a $d$-dimensional space, each facet $F_T$ and $F_v$ has dimension $d-1$.
This is because a supporting hyperplane removes one degree of freedom from the cone, effectively reducing its dimension by $1$.
Now, we compare \eqref{eq:hole_cactus01} and \eqref{eq:hole_cactus02} with the description of holes given in \eqref{eq:holes}.
We observe that, all the sets $q_{\mathcal{E}_{p}}[(C, C^{\prime})]+\mathbb{Z}F_{T}$ and $q_{\mathcal{E}_{p}}[(C, C^{\prime})]+\mathbb{Z}F_{w}$ in \eqref{eq:hole_cactus01}, as well as the sets $\widetilde{q}_{\mathcal{E}_{p}}[(C, C^{\prime})]+\mathbb{Z}F_{T}$ in \eqref{eq:hole_cactus02}, corresponds to a $(d-1)$-dimensional family of holes.
That is,  every family of holes of $S_{\mathbf{G}}$ is of dimension $d-1$.
Thus, by Theorem~\ref{thm:S2holes}, we say that the edge ring $\mathbb{K}[\mathbf{G}]$ always satisfies the $(S_2)$-condition.
\end{proof}

\section{Conclusions}\label{sec:conclude}

We have seen that the associated edge ring of a triangular cactus graph of diameter $4$ is always non-normal and satisfies the $(S_2)$-condition.
Moreover, our primary theorem, Theorem~\ref{thm:cactusS2}, provides supporting evidence to Conjecture~\ref{cnj:cactus}.

\begin{remark}\label{rmk:CMexpect2}
By further computations using {\tt Macaulay2}, we believe that every non-normal edge ring of triangular cactus graphs satisfying $(S_{2})$-condition is always Cohen--Macaulay. 
To the best of our knowledge, no triangular cactus graph has been identified whose edge ring does not satisfy the $(S_{2})$-condition.
Hence, we expect that the edge ring of every triangular cactus graph is Cohen--Macaulay. 
\end{remark}

\noindent
\textbf{Acknowledgments.} 
The first named author was supported by the Alexander von Humboldt Foundation and by a grant of the Ministry of Research, Innovation and Digitization, CNCS - UEFISCDI, project number PN-III-P1-1.1-TE-2021-1633, within PNCDI III.
The second named author expresses gratitude to Professor Akihiro Higashitani for his valuable comments, which greatly improved the manuscript. 
We want to extend our sincere gratitude to the anonymous reviewers for their invaluable feedback and insightful comments, which greatly enhanced the quality and clarity of this work.

\bibliography{CactusGraphs}

\end{document}